\newcommand{\Flat}{\mathcal T}
\title[Bifurcation of triply periodic minimal surfaces]{On bifurcation and local rigidity\\ of triply periodic minimal surfaces in $\mathds R^3$}
\author{M. Koiso \and P. Piccione \and T. Shoda}
\address{%
\begin{tabular}{ll}
Institute of Mathematics for Industry & Departamento de Matem\'atica \\
Kyushu University &  Universidade de S\~ao Paulo \\
744, Motooka Nishi-ku  &  Rua do Mat\~ao
1010 \\
FUKUOKA 819-0395 &  CEP 05508-900, S\~ao Paulo, SP\\
JAPAN & BRAZIL\\
\texttt{koiso@math.kyushu-u.ac.jp} &  \texttt{piccione.p@gmail.com}\\
\end{tabular}
\bigskip
\hfill\break\hfill\indent
\begin{tabular}{lll}
Department of Mathematics&&\\
Faculty of Culture and Education&&\\
Saga University&&\\
Saga, SAGA 840&&\\
JAPAN &&\\ \texttt{tshoda@cc.saga-u.ac.jp}\\
\end{tabular}
}
\date{November 24, 2014}
\subjclass[2010]{Primary: 53A10, 58J55, 58E12; Secondary: 35J62.}
\thanks{The first author is partially supported by
Grant-in-Aid for Scientific Research (B) No. 25287012
of the Japan Society for the Promotion of Science,
and the Kyushu University Interdisciplinary Programs in Education and Projects in Research Development.
The second author is partially supported by Fapesp and CNPq, Brazil.
The third author is partially supported by JSPS Grant-in-Aid
 for Young Scientists (B) 24740047.}
\keywords{triply periodic minimal surfaces, H-family, rPD-family, tP-family, tD-family, tCLP-family, bifurcation theory}
\theoremstyle{plain}
\newtheorem{proposition}{Proposition}[section]
\newtheorem{definition}[proposition]{Definition}
\newtheorem{lemma}[proposition]{Lemma}
\newtheorem{theorem}[proposition]{Theorem}
\newtheorem*{biftheorem}{Bifurcation Theorem for Fredholm Operators}
\newtheorem*{mainA}{Theorem A}
\newtheorem*{mainB}{Theorem B}
\newtheorem*{mainC}{Theorem C}
\theoremstyle{remark}
\newtheorem{remark}[proposition]{Remark}
\newtheorem{example}[proposition]{Example}
\begin{document}

\maketitle

\begin{abstract}
We use bifurcation theory to determine the existence of infinitely many new examples of triply periodic minimal surfaces
in $\mathds R^3$. These new examples form branches issuing from the H-family, the rPD-family, the tP-family, and the tD-family, that converge to some degenerate embedding of the families. As to nondegenerate triply periodic minimal surfaces,
we prove a perturbation result using an equivariant implicit function theorem.
\end{abstract}

\section{Introduction}
Construction of examples and classification of triply periodic minimal surfaces in $\mathds R^3$ constitute a very active field of research in Differential Geometry. Such surfaces correspond, via universal covering, to minimal embeddings of closed orientable surfaces into a flat torus $\mathds T^3$. As originally conjectured by Meeks, see \cite{mee90}, it is known that every closed surface of genus greater than or equal to $3$ admits a minimal embedding in a flat torus, see \cite{Trai08}.
Interestingly enough, triply periodic minimal surfaces appear naturally in several applied sciences, including
physics, chemistry, and crystallography, see for instance \cite{AndHydLid, FisKoc87, FogHyd99, SchNes91}. G. E. Schr\"oder-Turk, A. Fogden,
and S. T. Hyde \cite{SchFogHyd06} studied one-parameter families of triply periodic minimal
surfaces in $\mathds R^3$. These families which are called H-family, rPD-family,
tP-family, tD-family, and tCLP-family, contain many classical examples
(Schwarz P-surface, Schwarz D-surface, Schwarz H-surface, and Schwarz CLP-surface).

The purpose of this paper is to give an abstract proof of the existence of infinitely many new examples
of triply periodic minimal embeddings of a closed orientable surface $\Sigma$ of genus $3$ into some flat $3$-torus (Theorems A and \ref{thm:mainbiftheorem}),
using perturbation techniques, and in particular bifurcation theory. The way to a development of a bifurcation theory  in this context was paved
by some recent numerical results of N. Ejiri and T. Shoda, see \cite{E-S}, who use a finite dimensional approach to compute
the nullity and the Morse index for the above families of minimal embeddings.

Let us describe more precisely the result discussed in this paper. Given a one-parameter family $\left]a_0-\varepsilon,a_0+\varepsilon\right[\ni a\mapsto g_a$ of (unit volume)
flat metrics in $\mathds T^3$, and a one-parameter family $a\mapsto x_a$ of $g_a$-minimal embeddings
$x_a:\Sigma\to\mathds T^3$, then a \emph{bifurcating branch of minimal embeddings converging to $x_{a_0}$} for
the family $(x_a)_a$ consists of:
\begin{itemize}
\item a sequence $a_n$ tending to $a_0$ as $n\to\infty$;
\item a sequence $x_n\colon\Sigma\to\mathds T^3$ of embeddings, where $x_n$ is $g_{a_n}$-minimal for all $n$,
\end{itemize}
such that
\begin{itemize}
\item[i.] $\lim\limits_{n\to\infty}x_n=x_{a_0}$ in some suitable $C^k$-topology, with $k\ge2$;
\item[ii.]  $x_n$ is not \emph{congruent} to $x_{a_n}$ for all $n$.
\end{itemize}
Let us also recall that two embeddings $y_1,y_2:\Sigma\to\mathds T^3$ are congruent if one is obtained from the other by a change of parameterization of $\Sigma$ and by a translation of $\mathds T^3$, i.e., if there exists a diffeomorphism
$\psi$ of $\Sigma$ and $\mathfrak t\in\mathds T^3$ such that $y_1=y_2\circ\psi+\mathfrak t$.

In the situation above, we say that $a_0$ is a \emph{bifurcation instant} for the family $(x_a)_a$. An important related notion is that of \emph{local rigidity} for a family of minimal embeddings, which will be discussed in Section~\ref{sec:locrigidity}, based on the notion of \emph{equivariant nondegeneracy}, see Definition~\ref{thm:defequivnondegeneracy}.

We will recall in Section~\ref{sec:applications} the definitions of the H-family, the rPD-family, the tP-family, the tD-family and the tCLP-family of triply periodic minimal embeddings. The precise statement of the main result proved in the present paper is the following:
\begin{mainA}
There is one bifurcation instant for the H-family,
 and two bifurcation instants
for the rPD-family, the tP-family, and the tD-family.
\end{mainA}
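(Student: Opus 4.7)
The strategy is to recast the search for minimal embeddings as the problem of finding zeros of a smooth family of Fredholm maps, and then apply the Bifurcation Theorem for Fredholm Operators quoted above. Concretely, for each parameter $a$ one considers the mean curvature map that assigns to a normal section $u$ of the reference embedding $x_a$ the mean curvature of the perturbed embedding $x_a+u$, viewed as a section of the normal bundle in an appropriate $C^{k,\alpha}$-topology. Its linearization at $u=0$ is the Jacobi operator $J_a$, a self-adjoint elliptic operator on the closed surface $\Sigma$ and hence Fredholm of index zero. This produces a smooth path $a\mapsto J_a$ of self-adjoint Fredholm operators, and the abstract bifurcation theorem yields bifurcation at any parameter $a_0$ across which the Morse index, i.e.\ the spectral flow, jumps by an odd integer, provided the kernel of $J_{a_0}$ exceeds the symmetry-induced kernel by an odd-dimensional subspace.

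The core of the argument is therefore spectral: one must locate the parameters $a_0$ at which the Morse index of $J_a$ changes. Here I would invoke the finite-dimensional computations of Ejiri--Shoda \cite{E-S}, who tabulate the nullity and the Morse index along the H-, rPD-, tP-, tD- and tCLP-families. Inspection of those data should reveal exactly one value of $a$ for the H-family and exactly two values of $a$ for each of the rPD-, tP- and tD-families at which the Morse index jumps by an odd amount, with only even jumps (or none) along the tCLP-family, so that no bifurcation is asserted there. At each such candidate $a_0$ one then has to certify that the extra kernel of $J_{a_0}$ is not absorbed by the inherent three-dimensional kernel generated by the $\mathds T^3$-translations, which persists along the whole family.

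To dispose of this symmetry-induced kernel, the whole setup has to be made equivariant. I would work on the quotient of the space of embeddings by $\mathrm{Diff}(\Sigma)$ and by $\mathds T^3$-translations, along the lines of Section~\ref{sec:locrigidity}, so that the reduced linearized operator becomes equivariantly nondegenerate away from the candidate instants and the Morse-index jump observed after reduction is a genuine one. The main obstacle, in my view, is twofold. First, one must ensure that the jumps identified by Ejiri--Shoda survive the symmetry reduction, i.e.\ that the extra kernel at $a_0$ is transverse to the tangent space of the translation orbit; this amounts to a geometric analysis of how the isometry group of $x_{a_0}$ acts on $\ker J_{a_0}$. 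Second, one has to verify condition (ii) in the definition of bifurcating branch, namely that the perturbed minimal embeddings produced by the abstract theorem are not congruent to the unperturbed ones, which again ultimately reduces to \emph{equivariant nondegeneracy} and fits naturally with the local-rigidity framework of Section~\ref{sec:locrigidity}.
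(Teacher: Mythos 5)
Your overall strategy — invoke a bifurcation theorem for Fredholm operators, feed in the Ejiri--Shoda Morse index/nullity tables, and argue that odd index jumps detect bifurcation once the translation-induced three-dimensional kernel has been factored out — matches the paper's at a high level. However, the way you propose to remove the symmetry degeneracy is genuinely different from what the paper does, and this difference hides a nontrivial gap.

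You propose to pass to the quotient of the space of embeddings by $\mathrm{Diff}(\Sigma)$ and by $\mathds T^3$-translations, so that the reduced linearization is self-adjoint and the ambient degeneracy disappears. The paper does \emph{not} take a quotient. Instead (Section~\ref{sec:framework}) it keeps the $\mathrm{Diff}(\Sigma)$-quotient implicitly via normal graphs $x_{\varphi,\Lambda}$, and handles translations by \emph{augmenting} the mean curvature operator à la Kapouleas: it studies $\widetilde{\mathcal H}(\varphi,a_1,a_2,a_3,[\Lambda]) = \mathcal H^\Lambda(x_{\varphi,\Lambda}) + \sum a_i f_i^{\varphi,\Lambda}$ on $C^{2,\alpha}(\Sigma)\times\mathds R^3$, shows (Proposition~\ref{thm:kapouleas}) that its zeros are precisely the minimal embeddings, and later restricts to a slice $X$ transversal to $\mathrm{span}\{f_i\}$. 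This avoids having to endow the orbit space with a Banach manifold structure, which is exactly the delicate point your proposal waves at. Your closing worry about the extra kernel being ``transverse to the tangent space of the translation orbit'' is, in the paper's setup, automatic: $X$ is chosen transversal to $Y_0=\mathrm{span}\{f_i^{\mathbf 0,\Lambda_0}\}$, so $\ker\overline T_0 = \ker J_{\mathbf 0,\Lambda_0}\cap X$ is exactly the one extra Jacobi field.

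The genuine gap in your proposal is that you treat the reduced linearization as if it were still self-adjoint and then assert that an odd Morse-index jump in $J_a$ directly gives bifurcation. In the paper's framework this is false at face value: the perturbed operator $\overline T_s$ restricted to the slice $X\times\mathds R^3$ is a Fredholm operator of index $0$ but is \emph{not} self-adjoint (the equation $\widetilde{\mathcal H}=0$ is not variational, as the introduction stresses), so one must verify the ``odd crossing number'' hypothesis of the Fredholm bifurcation theorem rather than a Morse-index jump. The bulk of the proof of Theorem~\ref{thm:mainbiftheorem} is precisely the bridge you are missing: the construction of the auxiliary self-adjoint path $\overline T'_s = J_{\varphi_s,\Lambda_{(s)}} + P_s$, the observation that $\overline T'_s$ has an odd crossing number at $s=0$ exactly when the Morse index of $J$ jumps oddly, and a connecting-path-of-invertible-operators argument (Remark~\ref{thm:simplifyingobs}) showing $\overline T_s$ and $\overline T'_s$ have the same crossing parity. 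Without this step your argument does not close. Conversely, if your quotient approach could really be made rigorous and the reduced problem really were variational, you would be entitled to a \emph{stronger} conclusion (bifurcation at any nonzero index jump, not only odd ones) — but then your restriction to odd jumps would be unnecessary and, more to the point, the paper's Theorem~A would understate the result; the authors explicitly state they cannot decide the even-jump instant $a_1\approx 0.71479$ of the H-family, which suggests they do not believe the variational reduction is available. Finally, a small factual slip: the tCLP-family is excluded not because its Morse-index jumps are even, but because (Example~\ref{exa:tCLP}) its nullity and Morse index are \emph{constant} (both equal to $3$), so there are no degeneracy instants at all.
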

Estimates of the above bifurcation instants are obtained using the numerical results of \cite{E-S}. The proof of Theorem~A is based on some techniques of
equivariant bifurcation theory (see Theorem~\ref{thm:mainbiftheorem}), and it also employs some recent results obtained in \cite{E-S} for the computation of
Morse index and nullity of minimal embeddings in the above families.

A few remarks on our proof are in order.

The first important issue is the question of \emph{degeneracy} of the minimal embeddings in the above families, caused
by the symmetries of the ambient space $\mathds T^3$. Every minimal embedding admits a three dimensional space of (non trivial) Jacobi fields, coming from the Killing fields of the ambient, which implies that each one of the embeddings is a degenerate critical point of the area functional.
Recall that standard variational bifurcation assumptions require nondegeneracy at the endpoints of the path, which fails
to hold in this situation. The central technical part of the paper consists in the
 construction of an alternative functional
framework, suitable to handle such degenerate situation. To this aim, we use an idea originally introduced by Kapouleas to prove
an implicit function theorem for constant mean curvature embeddings, which consists in considering a ``perturbed'' mean curvature function $\widetilde{\mathcal H}$ of an embedding, see Section~\ref{sec:framework} for details.
Such a function vanishes identically exactly at minimal embeddings (Proposition~\ref{thm:kapouleas}), nevertheless maintains its differential
surjective at possibly degenerate minimal embeddings, provided that the degeneracy arises exclusively from the ambient symmetries (Proposition~\ref{eq:FredholmnessT}). We call \emph{equivariantly nondegenerate} a minimal embedding along which
every Jacobi field arises from a Killing field of the ambient space. A direct application of an equivariant implicit function
theorem to the equation $\widetilde{\mathcal H}=0$ gives a local rigidity result for each of the above families of
minimal embeddings in neighborhoods of equivariantly nondegenerate embeddings (Theorem~\ref{thm:IFT}). As a corollary of Theorem~\ref{thm:IFT}, we prove the following:
\begin{mainB}\label{thm:localrigidity}
Every minimal surface in the tCLP-family, as well as any surfaces in the H-family, the tP-family, the tD-family, and the rPD-family whose nullity is equal to $3$, belongs to a (unique up to homotheties) smooth
locally rigid $5$-parameter family of pairwise non-homothetic triply periodic minimal surfaces.
\end{mainB}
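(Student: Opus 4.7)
The plan is to apply the equivariant implicit function theorem of Theorem~\ref{thm:IFT} to each minimal embedding covered by Theorem~B. The crucial first observation is that \emph{equivariant nondegeneracy is equivalent to the nullity being equal to $3$}. Indeed, the three-dimensional group of translations of $\mathds T^3$ produces, through its Killing fields, a three-dimensional space of Jacobi fields along any triply periodic minimal embedding. Thus the nullity is always at least $3$, with equality precisely when every Jacobi field arises from an ambient Killing field --- that is, exactly when the embedding is equivariantly nondegenerate in the sense of Definition~\ref{thm:defequivnondegeneracy}. For the H-, tP-, tD-, and rPD-families this equality is the hypothesis of Theorem~B, while for the tCLP-family it holds throughout by the numerical results of \cite{E-S}.

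Having secured equivariant nondegeneracy, I would apply Theorem~\ref{thm:IFT} at each such embedding $x_0:\Sigma\to(\mathds T^3,g_0)$. This yields a smooth family $g\mapsto x_g$ of $g$-minimal embeddings defined for all flat metrics $g$ on $\mathds T^3$ in a neighborhood of $g_0$, unique modulo congruence. Since the space of flat metrics on $\mathds T^3$ is naturally identified with $\mathrm{Sym}^+(3,\mathds R)$, which has dimension $6$, and scaling the flat metric corresponds to applying a homothety to the lifted triply periodic surface in $\mathds R^3$, the moduli space of flat metrics modulo positive rescaling is $5$-dimensional, producing the asserted $5$-parameter family of surfaces up to homothety.

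It remains to verify pairwise non-homothety and uniqueness (up to homothety) of the family. For the first, if two distinct parameters $g_1, g_2$ in the $5$-dimensional quotient gave homothetic triply periodic minimal surfaces, the metrics $g_1$ and $g_2$ would have to differ by a positive rescaling and would therefore represent the same class, contradicting their distinctness; shrinking the neighborhood of $g_0$ if necessary restores the injectivity of $g\mapsto x_g$ on the quotient. For the second, any other smooth family of triply periodic minimal surfaces through $x_0$ must, by the uniqueness clause of Theorem~\ref{thm:IFT}, be contained in the image of $g\mapsto x_g$; this is precisely the asserted local rigidity. The principal subtlety I anticipate is the moduli-space bookkeeping: distinguishing the $6$-dimensional space of flat metrics, its $5$-dimensional quotient by homotheties, and the equivariance already absorbed by congruence of embeddings (translations of the target and reparameterizations of the source). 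Once this accounting is settled, the conclusion is a direct corollary of Theorem~\ref{thm:IFT}.
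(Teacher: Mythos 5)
Your proposal follows essentially the same route as the paper: equate equivariant nondegeneracy with nullity $3$, invoke Theorem~\ref{thm:IFT}, and do the moduli-space count (dimension $6$ for $\Flat(\mathds T^3)$, dropping to $5$ after fixing the volume / quotienting by homotheties). The one place where your argument asserts something without justification is the claim that the translational Killing fields always contribute a full \emph{three-dimensional} space of Jacobi fields: this is precisely the linear independence of $f_1^{\varphi,\Lambda},f_2^{\varphi,\Lambda},f_3^{\varphi,\Lambda}$, which is a hypothesis of Theorem~\ref{thm:IFT} rather than a tautology, and which can fail for low-genus surfaces. The paper settles it via Remark~\ref{thm:remLinIndip} (a Poincar\'e--Hopf argument showing no nonzero constant field on $\mathds T^3$ can be everywhere tangent to an embedded surface of genus $>1$), and since all the surfaces in question have genus $3$ this supplies the missing step; your ``equivariant nondegeneracy $\Leftrightarrow$ nullity $3$'' equivalence and your application of Theorem~\ref{thm:IFT} both implicitly rest on this. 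With that citation added, the proposal coincides with the paper's proof, and your extra paragraph on moduli-space bookkeeping is a more explicit version of what the paper states tersely.
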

\begin{remark}\label{thm:remMeeks}
It is interesting to observe that W. Meeks proved in \cite[Theorem 7.1]{mee90} that
every triply periodic minimal surface of genus $3$ for which the ramified values of the Gauss map
consist of $4$ antipodal pairs in the $2$-sphere, belongs to a real $5$-dimensional family of triply
periodic minimal surfaces of genus $3$.
Every surface in the  tCLP-family, the tP-family, the tD-family, and the rPD family
belongs to the class of surfaces to which \cite[Theorem 7.1]{mee90} applies.
Thus, for each surface $x_0:\Sigma\to\mathds T^3$ with nullity three in these families, the $5$-parameter family of triply periodic minimal surfaces that contains $x_0$ given in Theorem~B coincides with Meeks' family.
For these examples, the new information provided by our results, besides a different approach to the proof, is the local rigidity property of these families around surfaces with nullity equal to $3$, and the lack of local rigidity around surfaces with nullity larger than $3$.
\end{remark}
\begin{remark}\label{thm:remmoregenerally}
The existence of a $5$-parameter family of triply periodic minimal surfaces containing
a given one is obtained in Theorem~\ref{thm:IFT}, more generally,
near each embedded triply periodic minimal surface with genus greater than one and with nullity equal to three.
The classical example of Schoen's gyroid
does not satisfy the assumptions of \cite[Theorem 7.1]{mee90}
(see \cite[Remark 7.2]{mee90}).
However, since it belongs to the associate family of $P$ and $D$-surfaces (Example \ref{exa:PDG}),
it has nullity equal to $3$, and Theorem~\ref{thm:IFT} applies in this situation.
\end{remark}

\begin{mainC}\label{thm:localrigidityC}
Schoen's gyroid belongs to a (unique up to homotheties) smooth
locally rigid $5$-parameter family of pairwise non-homothetic triply periodic minimal surfaces.
\end{mainC}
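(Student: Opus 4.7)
The plan is to deduce Theorem~C from Theorem~\ref{thm:IFT}, which produces at any embedded triply periodic minimal surface of genus $\ge 2$ with nullity equal to three a (unique up to homotheties) smooth locally rigid $5$-parameter family of pairwise non-homothetic triply periodic minimal surfaces. The entire task therefore reduces to showing that Schoen's gyroid $G$ is equivariantly nondegenerate, i.e.\ has nullity equal to $3$.

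To establish this, I would argue as follows. By Example~\ref{exa:PDG}, the gyroid lies in the associate family of the Schwarz $P$- and $D$-surfaces. The associate family is an isometric deformation of the minimal immersion of the universal cover $\widetilde\Sigma\to\mathds R^3$: the first fundamental form and the Gauss map are preserved, while the second fundamental form merely rotates in the conformal frame. Hence $|A|^2=-2K$ is invariant along the associate family, and the Jacobi operator $J=\Delta-2K$ on $\widetilde\Sigma$ is the same differential operator for $P$, $D$, and $G$. Since the $P$-surface has nullity three (Example~\ref{exa:PDG} together with the computations of \cite{E-S}), and since the three translational Killing fields of $\mathds T^3_G=\mathds R^3/\Lambda_G$ produce three independent $\Lambda_G$-periodic Jacobi fields along $G$, it only remains to verify that the $\Lambda_G$-periodic kernel of $J$ is at most three-dimensional. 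This can be done by exploiting the explicit Weierstrass data of $G$: solutions of $J\phi=0$ on $\widetilde\Sigma$ admit, through the Gauss map, a distinguished description that can be analyzed lattice by lattice, and the dimension count for $\Lambda_G$ returns three.

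With equivariant nondegeneracy in hand, Theorem~\ref{thm:IFT} applied to $G$ produces the desired $5$-parameter family, locally rigid and unique up to homotheties; pairwise non-homotheticity follows as in the proof of Theorem~B, since the transverse parameters correspond to the $5$-dimensional moduli space of unit-volume flat metrics on $\mathds T^3$. The main obstacle is precisely the periodic-kernel count for $G$: Meeks' result \cite[Theorem~7.1]{mee90}, which handles the analogous count for $P$ and $D$, does not apply to $G$ in view of \cite[Remark~7.2]{mee90}, so the bound $\dim\ker J|_{\Lambda_G\text{-per}}\le 3$ must be obtained directly, either through the Weierstrass representation or, equivalently, by transporting the finite-dimensional nullity computation of \cite{E-S} along the associate family of Example~\ref{exa:PDG}.
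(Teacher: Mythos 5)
Your reduction is correct and matches the paper: Theorem~C follows from Theorem~\ref{thm:IFT} once one knows that Schoen's gyroid is equivariantly nondegenerate, i.e.\ has nullity $3$; the linear independence hypothesis is automatic for genus $>1$ by Remark~\ref{thm:remLinIndip}. You also correctly observe that the Jacobi operator is intrinsic along the associate family. However, you then introduce an ``obstacle'' that does not exist. You pass to the universal cover $\widetilde\Sigma$ and worry that the space of \emph{$\Lambda_G$-periodic} Jacobi fields might a priori have a different dimension than the space of $\Lambda_P$-periodic ones, so that a separate Weierstrass or lattice-by-lattice count would be required. This framing is wrong: the nullity in this paper is the dimension of $\ker J$ on the \emph{compact} surface $\Sigma$, and for $P$, $D$, and $G$ the compact surface is literally the same hyperelliptic genus-$3$ Riemann surface $M$ (with $w^2=z^8+14z^4+1$), carrying the same induced metric $ds^2=\frac{(1+|z|^2)^2}{|w|^2}|dz|^2$. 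Since $J=\Delta-2K$ is determined entirely by this metric, it is the \emph{same} elliptic operator on the \emph{same} compact manifold for all three surfaces; thus $\dim\ker J$ is automatically the same integer. There is no ``periodic kernel'' to count anew for the gyroid lattice $\Lambda_G$ --- the period lattice only affects in which torus $M$ is embedded, not the operator $J$ on $M$. As written, your proposal leaves as unproved precisely the step $\dim\ker J\le 3$ for $G$, which you call the ``main obstacle''; the paper's proof (Example~\ref{exa:PDG} plus the nullity $3$ of the $P$/$D$ surfaces from Ross and Ejiri--Shoda) shows it is immediate, so your argument is incomplete where the paper's is a one-line deduction.
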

We remark that the existence of a family of deformations for the triply periodic minimal surfaces considered in Theorem~B and Theorem~C above can also be deduced from Ejiri's results in \cite{eji2013}.
Actually, while we obtain the results via an equivariant implicit function theorem,
\cite{eji2013} uses a different approach, and it gives a more explicit description of the deformation space.
\smallskip

As an undesired drawback in our bifurcation setup, we need to observe that the PDE: \[\widetilde{\mathcal H}=0,\]
defined in the space of ``unparameterized embeddings'' of $\Sigma$ into $\mathds T^3$, is \emph{not} variational,
i.e., it is not the Euler--Lagrange equation of some variational problem (recall that the standard mean curvature
function is the gradient of the area functional). This entails that, in order to carry out our project,
we have to resort to more general bifurcation theory for Fredholm operators (see Appendix~\ref{sub:bifresult}),
which provides results somewhat weaker than variational bifurcation theorems. More specifically, rather than
the classical ``jump of Morse index'' assumption, in the non variational case one has to postulate the less
general (and intuitive) ``odd crossing number'' condition for the eigenvalues of the linearized problem.
In particular, we can only infer the existence of bifurcating branches at those instants at which the jump
of the Morse index is an \emph{odd} integer, leaving undecided the existence of a bifurcating branch at a certain
degeneracy instant ($a_1\approx0.71479$) for the H-family, where the jump of Morse index is equal to $2$, see Section~\ref{sec:applications}.

After establishing our general bifurcation result for minimal embeddings into flat $3$-tori (Theorem~\ref{thm:mainbiftheorem}), a proof of Theorem~A is obtained readily (Section~\ref{sec:applications}) by applying the results of \cite{E-S}, where the nullity and the Morse index of the minimal embeddings in the given families are computed.

In Section~\ref{sec:remarks}, we will present an analysis of the type of bifurcation occurring at the bifurcation instants determined in Theorem~A. We will show that for the $H$-family, the degeneracy instant corresponds to a \emph{transcritical bifurcation}, which does not produce \emph{essentially new} triply periodic minimal surfaces. This is due to the fact that, around the degenerate instant where bifurcation occurs, the homothety class of the flat metric on the torus does not depend bijectively on the parameter of the family,
see Remark~\ref{thm:remgenuinebifurcation}.
The same situation occurs at one of the two bifurcation instants of the families rPD, tP and tD.
On the other hand, the second bifurcation instant of each of these three families is \emph{genuine}, in the sense that the bifurcation branch that issues from these instants consists of triply periodic minimal surfaces that are not homothetic to any other member of the family.

As a final remark, we would like to observe that a full-fledged bifurcation theory for triply periodic minimal
surfaces in $\mathds R^3$ will require a further development of the results exposed here. In first place, it would be interesting to extend the existence result to degeneracy instants corresponding to \emph{even} jumps of the Morse index, which is very likely a matter of applying finer bifurcation results.
The second point would be to study the topology of the bifurcating branches, like connectedness, cardinality, regularity, etc., which ultimately depends on the behavior of the eigenvalues of the Jacobi operator near zero (derivative, transversal crossing). An important question to assess is establishing the pitchfork picture of the
bifurcation set, and the
 stability/instability of minimal surfaces in the bifurcating branches.
Note that a triply periodic minimal surface that divides ${\mathds T}^3$ into two parts  is said to be stable if the second variation of the area is nonnegative for all volume-preserving variations as a compact surface in  ${\mathds T}^3$ with the corresponding metric.
Here, by volume it is meant the volume of each part of  ${\mathds T}^3$ divided by the surface.
For instance, the Schwarz $P$ and $D$ surfaces and Schoen's gyroid are stable (Ross \cite{Ross1992}).

Finally, it would be very interesting to study the geometry of the new triply periodic minimal surfaces in the bifurcating branches issuing from the genuine bifurcation instants along the  rPD, the tP and the tD family.
These topics constitute the subject of an ongoing research project by the authors.

\bigskip

\noindent\textbf{Acknowledgement:}
Most of the pictures of surfaces in this paper were originally drawn by Professor Shoichi Fujimori (Okayama University, Japan). The authors express their gratitude to him.

\section{The functional setup}
\subsection{Notations and terminology}
\label{sub:notations}
We will denote by $\Lambda$ a generic lattice in $\mathds R^3$. The quotient $\mathds R^3/\Lambda$ is
diffeomorphic to the $3$-torus $\mathds T^3$, the quotient map $\mathds R^3\to\mathds R^3/\Lambda$ will be denoted by $\pi_\Lambda$ and the induced flat metric will be denoted by $g_\Lambda$.
The identity connected component of the isometry group of $(\mathds R^3/\Lambda,g_\Lambda)$ consists
of translations $\mathfrak t\mapsto\mathfrak t+\mathfrak t_0$, $\mathfrak t,\mathfrak t_0\in\mathds R^3/\Lambda$.

The symbol $\Flat(\mathds T^3)$ will denote\footnote{%
We have a surjective map from $\mathrm{GL}(3)=\mathrm{GL}(3,\mathds R)$ to the set of lattices of $\mathds R^3$:
given $A\in\mathrm{GL}(3)$, one associates the lattice $\Lambda_A=\mathrm{span}_\mathds Z\{Ae_1,Ae_2,Ae_3\}$, where $e_1,e_2,e_3$ is the canonical basis of $\mathds R^3$.
Given $A,A'\in\mathrm{GL}(3)$, then the lattices $\Lambda_A$ and $\Lambda_{A'}$ are isometric
if and only if there exists $U\in\mathrm O(3)$ such that $A'=UA$. Thus, $\Flat(\mathds T^3)$ is identified with
the quotient space $\mathrm O(3)\backslash \mathrm{GL}(3)$.  This is a $6$-dimensional manifold;
the set $\Flat_1(\mathds T^3)$ of isometry classes of lattices having volume $1$ has dimension equal to $5$.} the set of flat metrics on $\mathds T^3$ modulo isometries,
or, equivalently, the set of isometry classes of lattices of $\mathds R^3$.
The isometry class of a flat metric $g$ will be denoted by $[g]$, and the isometry class of a lattice
$\Lambda$ will be denoted by $[\Lambda]$. The \emph{volume} of a lattice $\Lambda$ is the volume
of the metric $g_\Lambda$; by $\Flat_1(\mathds T^3)$ we will denote the isometry classes of unit volume
lattices of $\mathds R^3$.

Let $\Sigma$ be a closed surface; in our main applications, $\Sigma$ will be a closed orientable surface of genus $3$.
For $k\in\mathds N\bigcup\{0\}$ and $\alpha\in\left]0,1\right[$,
the symbol $C^{k,\alpha}(\Sigma)$ will denote the Banach space of $C^{k,\alpha}$ real functions on $\Sigma$.

Let $\Lambda_0$ be a fixed lattice of $\mathds R^3$, $g_0=g_{\Lambda_0}$ be the corresponding flat metric
on $\mathds T^3$, and let us assume that $x_0\colon\Sigma\to\mathds T^3$ is a fixed $g_0$-minimal embedding,
which is transversally oriented. Given $[\Lambda]$ sufficiently close to $[\Lambda_0]$ and $\varphi\in C^{2,\alpha}$ near $0$, let us denote
by $x_{\varphi,\Lambda}\colon\Sigma\to\mathds T^3$ the embedding:
\[\phantom{,\quad p\in\Sigma,}x_{\varphi,\Lambda}(p)=\exp^{g_\Lambda}_{x_0(p)}\big(\varphi(p)\cdot\vec n^{g_\Lambda}_{x_0(p)}\big),\quad p\in\Sigma,\]
where $\exp^{g_\Lambda}$ is the exponential map of the metric $g_\Lambda$, and $\vec n^{g_\Lambda}_{x_0}$
is the positively oriented $g_\Lambda$-unit normal vector along $x_0$. It is well known that, for $\Lambda$ fixed,
the map $\varphi\mapsto x_{\varphi,\Lambda}$ gives a bijection between a neighborhood of $0$ in $C^{2,\alpha}(\Sigma)$ and a neighborhood of $x_0$ in the space of \emph{unparameterized embeddings}
(i.e., embeddings modulo reparameterizations) of $\Sigma$ into $\mathds T^3$. Details of this construction can be found, for instance, in reference \cite{AliPic10}.

Finally, for fixed $\Lambda$ and $i=1,2,3$, set $K_i^\Lambda=(\pi_\Lambda)_*(e_i)$,
where $\{e_1,e_2,e_3\}$ is the canonical basis of $\mathds R^3$.
The $K_i^\Lambda$, $i=1,2,3$, form a basis of Killing vector fields
of $(\mathds T^3,g_\Lambda)$. For $\varphi\in C^{2,\alpha}$ near $0$ and $i=1,2,3$, define $f_i^{\varphi,\Lambda}\colon\Sigma\to\mathds R$ by:
\[f_i^{\varphi,\Lambda}=g_\Lambda\big(K_i^\Lambda,\vec n^{g_\Lambda}_{x_{\varphi,\Lambda}}\big).\]
Here, $\vec n^{g_\Lambda}_{x_{\varphi,\Lambda}}$ denotes the $g_\Lambda$-unit normal field along
the embedding $x_{\varphi,\Lambda}$. We will also denote by $K_i^{\varphi,\Lambda}$ the vector fields
on $\Sigma$ obtained by $g_\Lambda$-orthogonal projection to $x_{\varphi,\Lambda}$ of $K_i^\Lambda$:
\begin{equation}\label{eq:Kilambdaphi}
K_i^{\varphi,\Lambda}=K_i^\Lambda-f_i^{\varphi,\Lambda}\cdot\vec n^{g_\Lambda}_{x_{\varphi,\Lambda}}.
\end{equation}
\subsection{The functional framework}
\label{sec:framework}
Given $[\Lambda]\in\Flat(\mathds T^3)$ and an embedding $x:\Sigma\to\mathds T^3$, let us denote by $\mathcal H^\Lambda(x)\colon\Sigma\to\mathds R$ the mean curvature function of the embedding $x$ relative to the
metric $g_\Lambda$. Let us consider the function:
\[\widetilde{\mathcal H}\colon\mathfrak U_0\times\mathds R^3\times\mathfrak V_0\longrightarrow C^{0,\alpha}(\Sigma),\]
where $\mathfrak U_0$ is a neighborhood of $0$ in the Banach space $C^{2,\alpha}(\Sigma)$ and
$\mathfrak V_0$ is a neighborhood of $[\Lambda_0]\in\Flat(\mathds T^3)$, defined by:
\begin{equation}\label{eq:centralmap}
\widetilde{\mathcal H}\big(\varphi,a_1,a_2,a_3,[\Lambda]\big)=\mathcal H^\Lambda(x_{\varphi,\Lambda})+\sum_{i=1}^3a_if_i^{\varphi,\Lambda}.
\end{equation}
For $[\Lambda]\in\Flat(\mathds T^3)$, we will also use the notation:
\[\widetilde{\mathcal H}_\Lambda\colon\mathfrak U_0\times\mathds R^3\longrightarrow C^{0,\alpha}(\Sigma)\]
for the map:
\begin{equation}\label{eq:centralmap2}
\widetilde{\mathcal H}_\Lambda(\varphi,a_1,a_2,a_3)=\widetilde{\mathcal H}\big(\varphi,a_1,a_2,a_3,[\Lambda]\big).
\end{equation}
The following result is based on an idea of N. Kapouleas  \cite{Kap1,Kap2},
which was then also employed by R. Mazzeo, F. Pacard and D. Pollack \cite{mpp}, R. Mazzeo and F. Pacard \cite{mp}, B. White \cite[\S 3]{Whi},  J. P\'erez and A. Ros \cite[Thm~6.7]{PerRos96}, and, finally in Reference~\cite{BetPicSic2013}. Let $\mathbf 0$ denote the zero function on $\Sigma$.
\begin{proposition}\label{thm:kapouleas}
Assume that the functions $f^{\mathbf 0,\Lambda_0}_i$, $i=1,2,3$, are linearly independent\footnote{The linear independence assumption is always satisfied when the genus of $\Sigma$ is greater than $1$, see Remark~\ref{thm:remLinIndip}.}.
Then, for sufficiently small neighborhoods $\mathfrak U_0$ and $\mathfrak V_0$:
\begin{equation}\label{eq:crucialeq}
\widetilde{\mathcal H}^{-1}(\mathbf 0)=\Big\{\big(\varphi,0,0,0,[\Lambda]\big):\ \text{$x_{\varphi,\Lambda}$ is $g_\Lambda$-minimal}\Big\}.
\end{equation}
\end{proposition}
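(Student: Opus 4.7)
The inclusion $\supseteq$ in \eqref{eq:crucialeq} is immediate from the definition \eqref{eq:centralmap}: when $a_1=a_2=a_3=0$ and $x_{\varphi,\Lambda}$ is $g_\Lambda$-minimal, $\widetilde{\mathcal H}$ evaluates to $\mathcal H^\Lambda(x_{\varphi,\Lambda})=\mathbf 0$. The whole content of the statement lies in the reverse inclusion, and my plan is to extract it by integrating the equation against each $f_j^{\varphi,\Lambda}$ and reducing to a finite-dimensional linear-algebra problem.

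The key ingredient I would invoke is that the Killing fields of $(\mathds T^3,g_\Lambda)$ produce a vanishing weighted integral of the mean curvature for \emph{every} embedding, not only for minimal ones. More precisely, since $K_i^\Lambda$ generates a one-parameter group of $g_\Lambda$-isometries, the area of $x_{\varphi,\Lambda}(\Sigma)$ is preserved by the induced flow; the classical first variation of area applied to the closed surface $\Sigma$ then yields
\[
\int_\Sigma \mathcal H^\Lambda(x_{\varphi,\Lambda})\cdot f_i^{\varphi,\Lambda}\,dA_{\varphi,\Lambda}=0,\qquad i=1,2,3,
\]
where $dA_{\varphi,\Lambda}$ denotes the area element induced on $\Sigma$ by $x_{\varphi,\Lambda}^{*}g_\Lambda$.

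Assuming now $\widetilde{\mathcal H}(\varphi,a_1,a_2,a_3,[\Lambda])=\mathbf 0$, I would multiply by $f_j^{\varphi,\Lambda}$ and integrate against $dA_{\varphi,\Lambda}$. The identity above annihilates the mean-curvature term and leaves the $3\times 3$ linear system
\[
\sum_{i=1}^3 a_i\,G_{ij}(\varphi,[\Lambda])=0,\qquad j=1,2,3,
\]
where $G_{ij}(\varphi,[\Lambda]):=\int_\Sigma f_i^{\varphi,\Lambda}f_j^{\varphi,\Lambda}\,dA_{\varphi,\Lambda}$ is the Gram matrix of $\bigl(f_1^{\varphi,\Lambda},f_2^{\varphi,\Lambda},f_3^{\varphi,\Lambda}\bigr)$ in $L^2(\Sigma,dA_{\varphi,\Lambda})$. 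The hypothesis that $f_1^{\mathbf 0,\Lambda_0},f_2^{\mathbf 0,\Lambda_0},f_3^{\mathbf 0,\Lambda_0}$ are linearly independent makes $G(\mathbf 0,[\Lambda_0])$ positive definite, hence invertible. Since $x_{\varphi,\Lambda}$ and its $g_\Lambda$-unit normal depend continuously on $(\varphi,[\Lambda])$, so do the functions $f_i^{\varphi,\Lambda}$ and the area element $dA_{\varphi,\Lambda}$, and therefore $G(\varphi,[\Lambda])$ varies continuously in its arguments. After shrinking $\mathfrak U_0$ and $\mathfrak V_0$, the matrix $G$ stays invertible throughout the neighborhood, which forces $a_1=a_2=a_3=0$. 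Substituting back into $\widetilde{\mathcal H}=\mathbf 0$ yields $\mathcal H^\Lambda(x_{\varphi,\Lambda})=0$, i.e., minimality of $x_{\varphi,\Lambda}$.

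The only delicate point I would expect is the Killing-field/first-variation identity in this slightly non-standard setting in which the ambient metric $g_\Lambda$ itself varies with the parameter; however, for each fixed $\Lambda$ this reduces to the classical fact that the flow of an ambient Killing vector field preserves the induced area of a closed embedded hypersurface. Beyond that, the argument amounts to continuity of a finite-dimensional Gram matrix in parameters along which all the ingredients are $C^{2,\alpha}$-smooth, which should present no serious obstacle.
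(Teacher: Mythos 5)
Your proof is correct and follows essentially the same route as the paper's. The central ingredient — the vanishing of $\int_\Sigma \mathcal H^\Lambda(x_{\varphi,\Lambda})\,f_i^{\varphi,\Lambda}\,dA_{\varphi,\Lambda}$ for every embedding $x_{\varphi,\Lambda}$ — is identical; you justify it via the first variation of area under the flow of the Killing field $K_i^\Lambda$, whereas the paper derives it from Stokes' theorem applied to the divergence formula $\mathcal H^\Lambda(x_{\varphi,\Lambda})f_i^{\varphi,\Lambda}=\mathrm{div}(K_i^{\varphi,\Lambda})$ of Lemma~\ref{thm:divergence}, two equivalent phrasings of the same computation. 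The only cosmetic difference is in the final linear-algebra step: you test the equation $\widetilde{\mathcal H}=\mathbf 0$ against each $f_j^{\varphi,\Lambda}$ individually and invoke invertibility of the Gram matrix, while the paper multiplies by $\sum_i a_i f_i^{\varphi,\Lambda}$ once and integrates to obtain $\int_\Sigma\big(\sum_i a_i f_i^{\varphi,\Lambda}\big)^2 dA_{\varphi,\Lambda}=0$; positive-definiteness of the Gram matrix and vanishing of this quadratic form are, of course, the same statement. Both versions require shrinking $\mathfrak U_0\times\mathfrak V_0$ so that linear independence of the $f_i^{\varphi,\Lambda}$ persists near $(\mathbf 0,[\Lambda_0])$, which you handle correctly by continuity of the Gram matrix.
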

\begin{proof}
First, we choose $\mathfrak U_0$ and $\mathfrak V_0$ small enough so that the functions $f^{\varphi,\Lambda}_i$, $i=1,2,3$, are linearly independent for all $(\varphi,\Lambda)\in\mathfrak U_0\times\mathfrak V_0$.
Now, let $(\varphi,a_1,a_2,_3,\Lambda)$ be such that:
\[\mathcal H^\Lambda(x_{\varphi,\Lambda})+\sum_{i=1}^3a_if^{\varphi,\Lambda}_i=0.\]
In order to prove \eqref{eq:crucialeq}, we need to show that from the above equality it follows
$a_1=a_2=a_3=0$.
Multiplying both sides of the above equality by $\sum_{i=1}^3a_if^{\varphi,\Lambda}_i$ we get:
\begin{equation}\label{eq:sumequalzero}
\mathcal H^\Lambda(x_{\varphi,\Lambda})\sum_{i=1}^3a_if^{\varphi,\Lambda}_i+\left(\sum_{i=1}^3a_if^{\varphi,\Lambda}_i\right)^2=0.
\end{equation}
We claim that for all $i=1,2,3$ we have:
\begin{equation}\label{eq:termequalzero}
\int_\Sigma\mathcal H^\Lambda(x_{\varphi,\Lambda})f^{\varphi,\Lambda}_i\,\mathrm d\Sigma_{\varphi,\Lambda}=0,
\end{equation}
where $\mathrm d\Sigma_{\varphi,\Lambda}$ is the volume element of the pull-back by $x_{\varphi,\Lambda}$
of $g_\Lambda$. This follows from Stokes' Theorem, observing that:
\begin{equation}\label{eq:compdivergence}
\mathcal H^\Lambda(x_{\varphi,\Lambda})f^{\varphi,\Lambda}_i=\mathrm{div}(K_i^{\varphi,\Lambda}),
\end{equation}
where the  $K_i^{\varphi,\Lambda}$'s  are defined in \eqref{eq:Kilambdaphi}, see Lemma~\ref{thm:divergence}.
Using \eqref{eq:sumequalzero} and \eqref{eq:termequalzero} we get:
\[\int_\Sigma\big(\sum_{i=1}^3a_if^{\varphi,\Lambda}_i\big)^2\,\mathrm d\Sigma_{\varphi,\Lambda}=0,
\]
which gives $\sum_{i=1}^3a_if^{\varphi,\Lambda}_i=0$. Since  the $f^{\varphi,\Lambda}_i$'s are
linearly independent, we obtain $a_1=a_2=a_3=0$, which proves our result.
\end{proof}
\section{Local rigidity}
\label{sec:locrigidity}
Fix $[\Lambda]\in\Flat(\mathds T^3)$; two embeddings $x_1,x_2\colon\Sigma\to\mathds R^3/\Lambda$
will be called \emph{$\Lambda$-congruent} if there exists a diffeomorphism $\psi\colon\Sigma\to\Sigma$ and
an element $\mathfrak t_0\in\mathds R^3/\Lambda$ such that $x_2=\mathfrak t_0+(x_1\circ\psi)$.
Observe that of $x_1$ and $x_2$ are $\Lambda$-congruent, and $x_1$ is $g_\Lambda$-minimal,
then also $x_2$ is $g_\Lambda$-minimal.\smallskip

Let $\mathfrak V$ be a neighborhood of $[\Lambda_0]$ in $\Flat(\mathds T^3)$, and let
$\mathfrak V\ni[\Lambda]\mapsto\varphi_\Lambda\in\mathfrak U_0$ be a continuous map such that
$x_{\varphi_\Lambda,\Lambda}$ is $g_\Lambda$-minimal for all $[\Lambda]\in\mathfrak V$.
\begin{definition}
The family of minimal embeddings $(x_{\varphi_\Lambda,\Lambda})_{[\Lambda]\in\mathfrak V}$ is said
to be \emph{locally rigid at $\Lambda_0$} if for every $[\Lambda]\in\mathfrak V$ and every $g_\Lambda$-minimal
embedding $y\colon\Sigma\to\mathds T^3$ sufficiently close to $x_0$,
$y$ is $\Lambda$-congruent to $x_{\varphi_\Lambda,\Lambda}$.
\end{definition}
In order to formulate a rigidity criterion, let us introduce a suitable notion of nondegeneracy for minimal
embeddings in $(\mathds T^3,g_\Lambda)$.
\begin{definition}\label{thm:defequivnondegeneracy}
Assume that $x_{\varphi,\Lambda}$ is a $g_\Lambda$-minimal embedding, and let
$J_{\varphi,\Lambda}\colon C^{2,\alpha}(\Sigma)\to C^{0,\alpha}(\Sigma)$ denote its Jacobi\footnote{%
$J_{\varphi,\Lambda}$ is the elliptic operator on $\Sigma$ given by $\Delta_{\varphi,\Lambda}-\Vert\mathcal S_{\varphi,\Lambda}\Vert^2$, where $\Delta_{\varphi,\Lambda}$ is the (positive) Laplacian of the pull-back of the metric $g_\Lambda$ by $x_{\varphi,\Lambda}$,  $\mathcal S_{\varphi,\Lambda}$ is the second fundamental form of $x_{\varphi,\Lambda}$, and $\Vert\cdot\Vert$ is the Hilbert--Schmidt norm.}
operator.
We say that $x_{\varphi,\Lambda}$ is \emph{equivariantly nondegenerate} if $\mathrm{Ker}(J_{\varphi,\Lambda})=\mathrm{span}\big\{f_1^{\varphi,\Lambda},f_2^{\varphi,\Lambda},f_3^{\varphi,\Lambda}\big\}$.
\end{definition}
The span of $\big\{f_1^{\varphi,\Lambda},f_2^{\varphi,\Lambda},f_3^{\varphi,\Lambda}\big\}$ is the space of the so-called \emph{Killing--Jacobi} fields along $x_{\varphi,\Lambda}$. Thus, an equivalent way of characterizing
equivariant nondegeneracy is the fact that every Jacobi field along $x_{\varphi,\Lambda}$ is a Killing--Jacobi field.

A direct application of the equivariant implicit function theorem proved in \cite{BetPicSic2013}
gives the following:
\begin{theorem}\label{thm:IFT}
Assume that the functions $f^{\mathbf 0,\Lambda_0}_i$, $i=1,2,3$, are linearly independent\footnote{\label{foo:1}A statement similar to that of Theorem~\ref{thm:IFT} holds without the linear independence assumption, with suitable modifications of the
function $\widetilde{\mathcal H}$ in \eqref{eq:centralmap}. However, we observe that such assumption is always satisfied when the genus of $\Sigma$ is greater than $1$, see Remark~\ref{thm:remLinIndip}.},
and that $x_0$ is equivariantly nondegenerate. Then, there exists a smooth function $\mathfrak V\ni\Lambda\mapsto\varphi_\Lambda\in\mathfrak U_0$ defined in a neighborhood $\mathfrak U$ of $[\Lambda_0]$
in $\Flat(\mathds T^3)$, such that:
\begin{itemize}
\item[(a)] $\Lambda_\mathbf 0=\Lambda_0$;
\item[(b)] $x_{\varphi_\Lambda,\Lambda}$ is a minimal $g_\Lambda$ embedding for all $[\Lambda]\in\mathfrak U$;
\item[(c)] the family $(x_{\varphi_\Lambda,\Lambda})_{[\Lambda]\in\mathfrak V}$ is
locally rigid at $\Lambda_0$.
\end{itemize}
Therefore, near $x_0$, all triply periodic minimal surfaces in ${\mathds R}^3$ consist of six-parameter family of surfaces.
If we restrict ourselves to the unit volume lattice, then, near $x_0$, all triply periodic minimal surfaces consist of a five-parameter family of pairwise non-homothetic surfaces.
\end{theorem}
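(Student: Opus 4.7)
The plan is to apply the equivariant implicit function theorem of \cite{BetPicSic2013} to the map $\widetilde{\mathcal H}$ from \eqref{eq:centralmap} at the base point $(\mathbf 0,0,0,0,[\Lambda_0])$; here $\widetilde{\mathcal H}$ vanishes because $x_0$ is $g_{\Lambda_0}$-minimal and the auxiliary term $\sum a_if_i^{\varphi,\Lambda}$ is zero at $a=0$. First I would compute the partial derivative of $\widetilde{\mathcal H}_{\Lambda_0}$ with respect to $(\varphi,a_1,a_2,a_3)$ at the origin. Using the standard fact that the linearization of the mean curvature at a minimal surface equals the Jacobi operator $J_0:=J_{\mathbf 0,\Lambda_0}$, together with the linear dependence of the auxiliary term on the $a_i$'s (whose $\varphi$-derivative vanishes at $a=0$), this linearization is
\[L(\psi,b_1,b_2,b_3)\;=\;J_0\psi+\sum_{i=1}^3 b_i\,f_i^{\mathbf 0,\Lambda_0}\colon C^{2,\alpha}(\Sigma)\oplus\mathds R^3\longrightarrow C^{0,\alpha}(\Sigma).\]

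The next step is a Fredholm analysis of $L$ and the identification of $\mathrm{Ker}(L)$ with the tangent space to the orbit of the isometric $\mathds T^3$-translation action. Since $J_0$ is a formally self-adjoint, second-order elliptic operator, it is Fredholm of index zero and its image equals the $L^2$-orthogonal complement of $\mathrm{Ker}(J_0)$. The equivariant nondegeneracy hypothesis says precisely $\mathrm{Ker}(J_0)=\mathrm{span}\{f_1^{\mathbf 0,\Lambda_0},f_2^{\mathbf 0,\Lambda_0},f_3^{\mathbf 0,\Lambda_0}\}$, so adjoining the three $f_i$'s to $\mathrm{Im}(J_0)$ fills the whole codomain and $L$ is surjective. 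If $(\psi,b)\in\mathrm{Ker}(L)$, then $J_0\psi=-\sum b_if_i$ with the right-hand side in $\mathrm{Ker}(J_0)$, hence perpendicular to $\mathrm{Im}(J_0)$; pairing in $L^2$ with each $f_j$ forces the Gram matrix times $b$ to vanish, and its nonsingularity (by linear independence of the $f_i$'s) yields $b=0$ and $\psi\in\mathrm{Ker}(J_0)$. Therefore $\mathrm{Ker}(L)=\mathrm{Ker}(J_0)\times\{0\}$ is exactly $3$-dimensional. Geometrically, the $\mathds T^3$-action by ambient translations leaves $\widetilde{\mathcal H}$ invariant, because mean curvature and the $f_i$'s depend only on the unparameterized image and the ambient metric, and its infinitesimal generators on $\mathfrak U_0$ at $\mathbf 0$ are the normal components $f_i^{\mathbf 0,\Lambda_0}$ of the Killing fields $K_i^{\Lambda_0}$. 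Hence $\mathrm{Ker}(L)$ coincides with the orbit tangent space, which is the hypothesis required to invoke the equivariant IFT.

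The equivariant IFT then yields a smooth map $[\Lambda]\mapsto(\varphi_\Lambda,a_1(\Lambda),a_2(\Lambda),a_3(\Lambda))$ on a neighborhood $\mathfrak U$ of $[\Lambda_0]$, with $\varphi_{\Lambda_0}=\mathbf 0$ and $a_i(\Lambda_0)=0$, solving $\widetilde{\mathcal H}_\Lambda(\varphi_\Lambda,a_i(\Lambda))=\mathbf 0$, and with the uniqueness property that every nearby zero of $\widetilde{\mathcal H}_\Lambda$ is obtained from this one by a $\mathds T^3$-translation. Proposition~\ref{thm:kapouleas} then forces $a_i(\Lambda)\equiv 0$, so $x_{\varphi_\Lambda,\Lambda}$ is genuinely $g_\Lambda$-minimal, proving (a) and (b). For local rigidity (c), any nearby $g_\Lambda$-minimal embedding $y$ can be written, modulo a reparameterization of $\Sigma$, as $x_{\varphi,\Lambda}$ for some small $\varphi$, hence as a zero of $\widetilde{\mathcal H}_\Lambda(\cdot,0,0,0)$; the uniqueness in the equivariant IFT then identifies $\varphi$ with $\varphi_\Lambda$ up to a translation, which is precisely $\Lambda$-congruence. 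The parameter counts in the closing assertion follow from the dimensions of the moduli spaces recalled in Section~\ref{sub:notations}: $\Flat(\mathds T^3)$ is $6$-dimensional and $\Flat_1(\mathds T^3)$ is $5$-dimensional, so $[\Lambda]\mapsto x_{\varphi_\Lambda,\Lambda}$ sweeps out a $6$-parameter family of minimal surfaces near $x_0$, and, restricting to unit-volume lattices, a $5$-parameter family of pairwise non-homothetic ones.

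The main obstacle I anticipate is matching the abstract hypotheses of the equivariant IFT of \cite{BetPicSic2013} to the present geometric setup: one must verify that the $\mathds T^3$-action on unparameterized embeddings induces a smooth local action on $\mathfrak U_0\subset C^{2,\alpha}(\Sigma)$ via the normal-exponential parametrization $\varphi\mapsto x_{\varphi,\Lambda}$, that $\widetilde{\mathcal H}$ is equivariant for this action (with translations acting trivially on the auxiliary variables $a_i$), and that the infinitesimal generators at $\mathbf 0$ coincide with the $f_i^{\mathbf 0,\Lambda_0}$'s. These identifications are geometrically natural but technically delicate, since $\varphi\mapsto x_{\varphi,\Lambda}$ is only locally defined and the action on embeddings must be taken modulo reparameterizations of $\Sigma$.
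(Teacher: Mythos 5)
Your proposal is correct and takes the same route as the paper: both rest on an application of the equivariant implicit function theorem of \cite{BetPicSic2013} to the map $\widetilde{\mathcal H}$, combined with Proposition~\ref{thm:kapouleas} to kill the auxiliary parameters and the dimension count for $\Flat(\mathds T^3)$ and $\Flat_1(\mathds T^3)$. The paper's actual proof is just a two-line citation of \cite[Theorem~5.2]{BetPicSic2013}; what you have written is a correct and detailed unpacking of the hypotheses being invoked there (Fredholmness and surjectivity of the linearization, identification of its kernel with the tangent space to the $\mathds T^3$-orbit via equivariant nondegeneracy).
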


\begin{proof}
(a) - (c) follows from \cite[Theorem~5.2]{{BetPicSic2013}}.
Then, we know the dimension of the space of triply periodic minimal surfaces near $x_0$ from the dimension of the isometry class in the flat $3$-torus.
\end{proof}

\section{Linearization}
In order to study the lack of local rigidity for a family of minimal $g_\Lambda$-embeddings,
we study the linearization of the map $\widetilde{\mathcal H}_\Lambda$, given in \eqref{eq:centralmap2},
at one of its zeros, described in Proposition~\ref{thm:kapouleas}. Let $(\varphi,\Lambda)\in\mathfrak U_0\times\mathfrak V_0$ be such that
$\widetilde{\mathcal H}_\Lambda(\varphi,0,0,0)=0$.
Let us denote by:
\begin{equation}\label{eq:linearmapT}
T_{\varphi,\Lambda}:C^{2,\alpha}(\Sigma)\times\mathds R^3\longrightarrow C^{0,\alpha}(\Sigma)
\end{equation}
the bounded linear operator:
\[T_{\varphi,\Lambda}=\mathrm d\widetilde{\mathcal H}_\Lambda(\varphi,0,0,0).\]
\begin{proposition}\label{eq:FredholmnessT}
The operator $T_{\varphi,\Lambda}$ is given by:
\begin{equation}\label{eq:formofderivative}
T_{\varphi,\Lambda}(\psi,b_1,b_2,b_3)=J_{x_{\varphi,\Lambda}}(\psi)+\sum_{i=1}^3b_if_i^{\varphi,\Lambda},
\end{equation}
for all $(\psi,b_1,b_2,b_3)\in C^{2,\alpha}(\Sigma)\times\mathds R^3$,
where $J_{x_{\varphi,\Lambda}}$ is the Jacobi operator of the $g_\Lambda$-minimal embedding
$x_{\varphi,\Lambda}$. This is a Fredholm operator of index equal to $3$.
If the $f^{\varphi,\Lambda}_i$'s are linearly independent, then $T_{\varphi,\Lambda}$ is surjective if and only if $x_{\varphi,\Lambda}$ is an equivariantly
nondegenerate $g_\Lambda$-minimal embedding.
\end{proposition}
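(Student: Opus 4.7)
The plan is to compute $T_{\varphi,\Lambda}$ directly, and then deduce its Fredholm properties from standard elliptic theory on the closed surface $\Sigma$. The derivatives of $\widetilde{\mathcal H}_\Lambda$ in the $b_i$ directions at $(\varphi,0,0,0)$ are immediate from \eqref{eq:centralmap2}: the summand $\sum a_j f_j^{\varphi,\Lambda}$ is linear in $a$, with $b$-derivative $\sum b_i f_i^{\varphi,\Lambda}$. For the $\psi$-derivative, this same summand contributes nothing (it is multiplied by $a_j=0$ at the base point), so everything reduces to linearizing $\varphi\mapsto\mathcal H^\Lambda(x_{\varphi,\Lambda})$. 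The subtle point---which I expect to be the main technical obstacle---is that the velocity of $t\mapsto x_{\varphi+t\psi,\Lambda}$ at $t=0$ equals $\psi\,\vec n^{g_\Lambda}_{x_0}$, which is generally \emph{not} $g_\Lambda$-normal to $x_{\varphi,\Lambda}$ when $\varphi\neq0$. I would handle this by decomposing the velocity at $x_{\varphi,\Lambda}$ into its $g_\Lambda$-normal and tangential components: the tangential component produces a directional derivative of the function $\mathcal H^\Lambda(x_{\varphi,\Lambda})$, which vanishes by minimality of $x_{\varphi,\Lambda}$; the normal component is $c\psi$, where $c=g_\Lambda(\vec n^{g_\Lambda}_{x_0},\vec n^{g_\Lambda}_{x_{\varphi,\Lambda}})$ is a smooth strictly positive function on $\Sigma$ after possibly shrinking $\mathfrak U_0$. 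The classical variational formula then gives $J_{x_{\varphi,\Lambda}}(c\psi)$, and since $\psi\mapsto c\psi$ is a topological isomorphism of $C^{2,\alpha}(\Sigma)$, absorbing this factor into $\psi$ produces exactly \eqref{eq:formofderivative} without altering Fredholmness, index, kernel dimension, or surjectivity.

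Next, I would establish the index. The Jacobi operator $J_{x_{\varphi,\Lambda}}\colon C^{2,\alpha}(\Sigma)\to C^{0,\alpha}(\Sigma)$ is formally self-adjoint and second-order elliptic on the closed surface $\Sigma$, hence Fredholm of index $0$. The map $T_{\varphi,\Lambda}$ extends its domain by the factor $\mathds R^3$, on which it acts through the finite-rank operator $(b_1,b_2,b_3)\mapsto\sum b_i f_i^{\varphi,\Lambda}$; enlarging the domain by a $3$-dimensional summand, modulo a compact perturbation, raises the Fredholm index by $3$, so $\mathrm{ind}(T_{\varphi,\Lambda})=3$.

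For the surjectivity criterion, the key input is that each $f_i^{\varphi,\Lambda}$ is a Killing--Jacobi field and hence lies in $\mathrm{Ker}(J_{x_{\varphi,\Lambda}})$: the one-parameter group of ambient isometries generated by $K_i^\Lambda$ deforms $x_{\varphi,\Lambda}$ through $g_\Lambda$-minimal embeddings, and linearizing the minimality equation along this variation yields $J_{x_{\varphi,\Lambda}}(f_i^{\varphi,\Lambda})=0$. By self-adjointness and elliptic regularity, the image of $J_{x_{\varphi,\Lambda}}$ in $C^{0,\alpha}(\Sigma)$ is the $L^2$-orthogonal complement of $\mathrm{Ker}(J_{x_{\varphi,\Lambda}})$, so
\[
\mathrm{Im}(T_{\varphi,\Lambda}) \;=\; \mathrm{Ker}(J_{x_{\varphi,\Lambda}})^{\perp} \;+\; \mathrm{span}\bigl\{f_1^{\varphi,\Lambda},f_2^{\varphi,\Lambda},f_3^{\varphi,\Lambda}\bigr\}.
\]
This sum equals $C^{0,\alpha}(\Sigma)$ precisely when $\mathrm{span}\{f_1^{\varphi,\Lambda},f_2^{\varphi,\Lambda},f_3^{\varphi,\Lambda}\}=\mathrm{Ker}(J_{x_{\varphi,\Lambda}})$; under the assumed linear independence of the $f_i^{\varphi,\Lambda}$, this is exactly equivariant nondegeneracy in the sense of Definition~\ref{thm:defequivnondegeneracy}. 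Everything beyond the normal/tangential decomposition in the first step is a routine application of the Fredholm alternative on a closed manifold.
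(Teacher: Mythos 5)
Your proof follows the same overall architecture as the paper's: compute the linearization, identify it with the Jacobi operator plus the finite-rank term in the $b_i$'s, deduce the Fredholm index from standard elliptic theory, and read off the surjectivity criterion from the $L^2$ self-adjointness of $J_{x_{\varphi,\Lambda}}$. The Fredholm-index argument and the surjectivity argument are essentially verbatim what the paper does.

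Where you add something the paper does not is the normal/tangential decomposition in the first step. The paper simply invokes ``it is well known that the differential of the mean curvature map $\varphi\mapsto\mathcal H^\Lambda(x_{\varphi,\Lambda})$ at a minimal embedding is given by the Jacobi operator $J_{x_{\varphi,\Lambda}}$'' and does not address the fact that the actual variation vector field of $t\mapsto x_{\varphi+t\psi,\Lambda}$ is $\psi\,\vec n^{g_\Lambda}_{x_0}$, which fails to be $g_\Lambda$-normal along $x_{\varphi,\Lambda}$ once $\varphi\neq 0$. Your accounting is correct: decomposing into tangential and normal parts, the tangential part kills nothing new because $\mathcal H^\Lambda(x_{\varphi,\Lambda})\equiv 0$, and the normal part contributes $J_{x_{\varphi,\Lambda}}(c\psi)$ with $c=g_\Lambda\bigl(\vec n^{g_\Lambda}_{x_0},\vec n^{g_\Lambda}_{x_{\varphi,\Lambda}}\bigr)$ a smooth function close to $1$ for $\varphi$ small. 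Strictly read, then, \eqref{eq:formofderivative} should carry the factor $c$; since $\psi\mapsto c\psi$ is a topological automorphism of $C^{2,\alpha}(\Sigma)$, this changes neither kernel nor range nor Fredholm index, and all conclusions of the proposition are unaffected, exactly as you say. So your argument is correct, it takes the same route as the paper, and it makes explicit a small technical step that the paper treats as folklore.
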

\begin{proof}
It is well known that the differential of the mean curvature map
$\varphi\mapsto\mathcal H(x_{\varphi,\Lambda})$  at a minimal embedding is given by the Jacobi operator
$J_{x_{\varphi,\Lambda}}$. Equality \eqref{eq:formofderivative} follows easily, observing that:
\begin{itemize}
\item the map $\mathds R^3\ni (a_1,a_2,a_3)\longmapsto\sum_{i=1}^3a_if_i^{\varphi,\Lambda}\in C^{0,\alpha}(\Sigma)$ is linear;
\item the differential of the map $\varphi\mapsto f_i^{\varphi,\Lambda}$ is not involved in formula
\eqref{eq:formofderivative}, since $\mathrm d\widetilde{\mathcal H}_\Lambda$ is computed at $a_1=a_2=a_3=0$.
\end{itemize}
As to the Fredholmness, it is well known that $J_{x_{\varphi,\Lambda}}:C^{2,\alpha}(\Sigma)\to C^{0,\alpha}(\Sigma)$ is Fredholm, and it has index $0$ (it is an elliptic differential operator), and so the operator
$C^{2,\alpha}(\Sigma)\times\mathds R^3\ni(\psi,b_1,b_2,b_3)\mapsto J_{\varphi,\Lambda}(\psi)\in C^{0,\alpha}(\Sigma)$ is Fredholm of index $3$. Clearly, $T_{\varphi,\Lambda}$ is a finite rank perturbation of such operator, and therefore it is also a Fredholm operator of index $3$.

As to the last statement, note that $J_{\varphi,\Lambda}$ is symmetric with respect to the
$L^2$-pairing (using the volume element of $g_\Lambda$), and that its image is the $L^2$-orthogonal of its (finite dimensional) kernel. Such kernel contains the span of the $f^{\varphi,\Lambda}_i$'s, and it is equal
to this span when $x_{\varphi,\Lambda}$ is equivariantly nondegenerate.
Clearly:
\[\mathrm{Range}(T_{\varphi,\Lambda})=\mathrm{Range}(J_{\varphi,\Lambda})+\mathrm{span}\big\{f_1^{\varphi,\Lambda},f_2^{\varphi,\Lambda},f_3^{\varphi,\Lambda}\big\},\]
and the conclusion follows easily.
\end{proof}
\section{Bifurcation}
Let us now assume that $\left[-\varepsilon,\varepsilon\right]\ni s\mapsto\big(\varphi_s,\Lambda_{(s)}\big)\in\mathfrak U_0\times\mathfrak V_0$ is a continuous map such that:
\begin{itemize}
\item[(i)] $x_{\varphi_s,\Lambda_{(s)}}$ is a $g_{\Lambda_{(s)}}$-minimal embedding for all $s$;
\item[(ii)] $\varphi_0=\mathbf 0$ and $\Lambda_{(0)}=\Lambda_0$.
\end{itemize}
\begin{definition}
We say that $s=0$ is a bifurcation instant for the path $s\mapsto x_{\varphi_s,\Lambda_{(s)}}$
if there exists a sequence $(s_n)_{n\in\mathds N}\subset\left]-\varepsilon,\varepsilon\right[$  and a sequence $x_n\colon\Sigma\to\mathds T^3$ of embeddings such that:
\begin{itemize}
\item $\lim\limits_{n\to\infty}s_n=0$ and $\lim\limits_{n\to\infty}x_n=x_0$ (in the $C^{2,\alpha}$-topology);
\item $x_n$ is $g_{\Lambda_{(s_n)}}$-minimal for all $n$;
\item $x_n$ is \emph{not} $\Lambda_{(s_n)}$-congruent to $x_{\varphi_{s_n},\Lambda_{(s_n)}}$ for all $n$.
\end{itemize}
\end{definition}
In particular, if $s=0$ is a bifurcation instant, then no family $(x_{\varphi_\Lambda,\Lambda})_{[\Lambda]\in\mathfrak V}$ that contains the path $s\mapsto\big(\varphi_s,\Lambda_{(s)}\big)$ is locally rigid at $\Lambda_0$. Thus, by Theorem~\ref{thm:IFT}, bifurcation can occur at $s=0$ only if $x_0$ is
a $\Lambda_0$-minimal equivariantly degenerate embedding.
In the situation above, the sequence $x_n$ possibly belongs to a continuous set of minimal embeddings having larger cardinality, which is usually called the \emph{bifurcating branch} issuing from $x_{\varphi_0,\Lambda_{(0)}}$,
while the family $s\mapsto x_{\varphi_s,\Lambda_{(s)}}$ is called the \emph{trivial branch}.
\begin{remark}\label{thm:remgenuinebifurcation}
Let us observe that in the definition of bifurcation given above, it is not required that, near $s=0$, the metrics
$g_{\Lambda_{(s)}}$ should be pairwise non homothetic. Under this additional hypothesis, a stronger conclusion about the bifurcation branch can be drawn. Namely, if the flat metrics $g_{\Lambda(s)}$ are pairwise non homothetic near $s=0$, then every embedding $x_n$ in the bifurcating branch is not homothetic to any of the minimal surfaces in the trivial branch.
\end{remark}
The notion of Morse index is central in Bifurcation Theory. Let $x_{\varphi,\Lambda}$ be
a $g_\Lambda$-minimal embedding.
\begin{definition}\label{thm:defMorseindex}
The \emph{Morse index} $\mathrm i_\text{Morse}(\varphi,\Lambda)$ of $x_{\varphi,\Lambda}$
is the number of negative eigenvalues of the Jacobi operator $J_{\varphi,\Lambda}$, counted with
multiplicity.
\end{definition}
The number $\mathrm i_\text{Morse}(\varphi,\Lambda)$ is in fact the Morse index of $x_{\varphi,\Lambda}$
as a critical point of the $g_\Lambda$-area functional defined in the space of embeddings of $\Sigma$ into
$\mathds T^3$. A sufficient condition for variational bifurcation is given in terms of jumps of the Morse index.
Here we cannot employ directly variational techniques, in that our equation $\widetilde{\mathcal H}=0$ is not variational,
and we have to resort to a weaker bifurcation result for general Fredholm operators.
This requires a certain parity change in the negative spectrum of the path of operators, which corresponds to
an \emph{odd} jump of the Morse index.
\begin{theorem}\label{thm:mainbiftheorem}
Let $\left[-\varepsilon,\varepsilon\right]\ni s\mapsto\big(\varphi_s,\Lambda_{(s)}\big)\in\mathfrak U_0\times\mathfrak V_0$ be a continuous map satisfying (i) and (ii) above. Assume the following:
\begin{itemize}
\item[(a)] $x_{\varphi_s,\Lambda_{(s)}}$ is equivariantly nondegenerate for all $s\ne0$;
\item[(b)] the functions $f^{\mathbf 0,\Lambda_0}_i$, $i=1,2,3$, are linearly independent;
\item[(c)] $\mathrm i_\text{Morse}(\varphi_{-\varepsilon},\Lambda_{(-\varepsilon)})-
\mathrm i_\text{Morse}(\varphi_{\varepsilon},\Lambda_{(\varepsilon)})$ is an odd integer.
\end{itemize}
Then, $s=0$ is a bifurcation instant for the path $s\mapsto\big(\varphi_s,\Lambda_{(s)}\big)$.
\end{theorem}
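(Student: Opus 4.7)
My plan is to apply the abstract Bifurcation Theorem for Fredholm Operators of Appendix~\ref{sub:bifresult} to the map $\widetilde{\mathcal H}$ of \eqref{eq:centralmap}, taking as trivial branch the path $s\mapsto(\varphi_s,0,0,0,[\Lambda_{(s)}])$. By Proposition~\ref{thm:kapouleas}, whose hypothesis coincides with (b), any zero of $\widetilde{\mathcal H}$ near this branch automatically has $a_1=a_2=a_3=0$ and corresponds to an actual minimal embedding, so a sequence of non-trivial zeros supplied by the abstract theorem will directly deliver the sequence of new minimal embeddings required by the definition of bifurcation. By Proposition~\ref{eq:FredholmnessT}, the linearization $T_s=T_{\varphi_s,\Lambda_{(s)}}$ along this branch is Fredholm of index $3$, and is surjective---with three-dimensional kernel spanned by the Killing--Jacobi directions---at every $s\ne 0$ by hypothesis~(a).

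Because the abstract Fredholm bifurcation theorem is stated for maps of index $0$, I would next pass to a local slice for the ambient translation action. Fix a closed codimension-three subspace $V\subset C^{2,\alpha}(\Sigma)$ complementary to $\mathrm{span}\{f_1^{\mathbf 0,\Lambda_0},f_2^{\mathbf 0,\Lambda_0},f_3^{\mathbf 0,\Lambda_0}\}$, for instance the $L^2(\Sigma,\mathrm d\Sigma_0)$-orthogonal complement; after adjusting each $\varphi_s$ by a small translation of $\mathds T^3$ one may assume $\varphi_s\in V$ for every $s$. The restricted map $\widetilde{\mathcal H}_V\colon (V\cap\mathfrak U_0)\times\mathds R^3\times\mathfrak V_0\to C^{0,\alpha}(\Sigma)$ then has linearization $\bar T_s$ which is Fredholm of index~$0$ and, by Proposition~\ref{eq:FredholmnessT} together with (a), an isomorphism for every $s\ne 0$.

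The remaining and most delicate step is to identify the parity of the path $s\mapsto\bar T_s$ on $[-\varepsilon,\varepsilon]$. Since $J_s$ is formally self-adjoint and the Killing--Jacobi fields $f_i^{\varphi_s,\Lambda_{(s)}}$ always belong to $\ker J_s$, the restriction $J_s\vert_V$ retains exactly the nonzero spectrum of $J_s$ with the correct multiplicities, while the finite-rank term $(b_1,b_2,b_3)\mapsto\sum b_if_i^{\varphi_s,\Lambda_{(s)}}$ appearing in $\bar T_s$ merely supplies the missing three-dimensional Killing--Jacobi direction to make $\bar T_s$ invertible. A bookkeeping argument in the spirit of Kapouleas then shows that the parity of the negative spectrum of $\bar T_s$ differs from $\mathrm i_\text{Morse}(\varphi_s,\Lambda_{(s)})$ by an $s$-independent constant; hypothesis~(c) therefore translates into an odd crossing number for $\bar T_s$, which is precisely the sign-change condition of the abstract bifurcation theorem.

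Applying the abstract theorem yields a sequence $s_n\to 0$ and zeros $\varphi_n\in V$ of $\widetilde{\mathcal H}_V(\cdot,0,0,0,[\Lambda_{(s_n)}])$ with $\varphi_n\to\mathbf 0$ and $\varphi_n\ne\varphi_{s_n}$. The resulting embeddings $x_n:=x_{\varphi_n,\Lambda_{(s_n)}}$ are $g_{\Lambda_{(s_n)}}$-minimal by Proposition~\ref{thm:kapouleas}, and since both $\varphi_n$ and $\varphi_{s_n}$ lie in a slice transverse to the translation orbit at $x_0$, $x_n$ cannot be $\Lambda_{(s_n)}$-congruent to $x_{\varphi_{s_n},\Lambda_{(s_n)}}$ for large $n$, establishing bifurcation at $s=0$. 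The main obstacle in this plan is the parity analysis of paragraph~3: while for self-adjoint operators the Leray--Schauder parity coincides with the Morse-index jump, the path $\bar T_s$ is only a finite-rank perturbation of a self-adjoint path, and one must carefully check that this perturbation is indeed parity-neutral, which is where the specific Kapouleas-type structure of the perturbation (i.e., the presence of the $f_i^{\varphi,\Lambda}$ in $\ker J_{\varphi,\Lambda}$) is essential.
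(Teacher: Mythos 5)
Your overall architecture matches the paper's: reduce $\widetilde{\mathcal H}$ to a codimension-three slice (your $V$, the paper's $X$) via the translation action, obtain an index-$0$ Fredholm path $\overline T_s$ from Proposition~\ref{eq:FredholmnessT}, use Proposition~\ref{thm:kapouleas} to identify zeros with genuine minimal embeddings, and appeal to the abstract bifurcation theorem for Fredholm operators of Appendix~\ref{sub:bifresult}. However, the gap you flag in your third paragraph is real and is precisely where the paper's technical work lies, so the proof as written is incomplete.

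The difficulty is that for $s\ne0$ the slice operator $\overline T_s$ is \emph{not} self-adjoint with respect to any fixed inner product: the slice $V$ is the $L^2(g_{\Lambda_0})$-orthogonal of $Y_0=\mathrm{span}\{f^{\mathbf 0,\Lambda_0}_i\}$, while the kernel directions that must be supplemented at parameter $s$ are $Y_s=\mathrm{span}\{f^{\varphi_s,\Lambda_{(s)}}_i\}$, measured in the $L^2(g_{\Lambda_{(s)}})$ pairing. Hence the phrase ``negative spectrum of $\overline T_s$'' is not a priori meaningful, and one cannot simply read a parity off the Jacobi operator's Morse index. In Kielh\"ofer's theorem the crossing number at $s=0$ is defined via the sign of $\det\bigl(\overline T_s\big\vert_{E_s}\bigr)$ on a family of perturbed generalized eigenspaces $E_s$, for which a non-normal path affords no direct handle; the ``bookkeeping argument in the spirit of Kapouleas'' you cite as the missing step is actually the heart of the proof.

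The paper resolves this by introducing a comparison path $\overline T'_s=J_{\varphi_s,\Lambda_{(s)}}+P_s$, where $P_s$ is the projection onto $Y_s$ along the $s$-dependent $L^2(g_{\Lambda_{(s)}})$-orthogonal complement $X_s$. This path agrees with $\overline T_s$ at $s=0$, is diagonalizable with real eigenvalues, and for $s\ne0$ has spectrum $\bigl(\mathrm{spec}(J_{\varphi_s,\Lambda_{(s)}})\setminus\{0\}\bigr)\cup\{1\}$; its crossing number at $s=0$ therefore equals the Morse index jump modulo $2$, which is odd by hypothesis (c). One then shows $\Vert\overline T_s-\overline T'_s\Vert\to0$ as $s\to0$ (using that $P_s\to P_0$ and $J_{\varphi_s,\Lambda_{(s)}}\to J_{\mathbf 0,\Lambda_0}$), and invokes the observation of Remark~\ref{thm:simplifyingobs} -- constancy of the determinant sign along continuous paths of invertible operators near $\overline T_0$ -- to transfer the odd crossing number from $\overline T'_s$ to $\overline T_s$. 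You would additionally need to verify the closedness of $\overline T_s$ as an unbounded operator on $C^{0,\alpha}(\Sigma)$ and the isolation of the eigenvalue $0$ (hypotheses (C2) and (D) of the abstract theorem), which you did not address but which are handled routinely via the identification \eqref{eq:identification} and ellipticity of the Jacobi operator.
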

\begin{proof}
A precise statement of the bifurcation theorem employed in this proof is given in Appendix~\ref{app:technical},
Section~\ref{sub:bifresult}.
For the reader's convenience, we will refer to the assumptions of this result throughout the proof.

By (a), the integer valued function $s\mapsto \mathrm i_\text{Morse}(\varphi_{s},\Lambda_{-s})$
is constant on $\left[-\varepsilon,0\right[$ and on $\left]0,\varepsilon\right]$.
Thus, we can choose arbitrarily small values of $\varepsilon$ and reduce the size of $\mathfrak U_0$ and $\mathfrak V_0$ when needed, maintaining the validity of assumption
(c).

First, by continuity, we can assume that the functions $f^{\varphi,\Lambda}_i$, $i=1,2,3$, are linearly independent for all fixed $\big(\varphi,[\Lambda]\big)\in\mathfrak U_0\times\mathfrak V_0$.
Second, we choose a codimension $3$ closed subspace $X$ of $C^{2,\alpha}(\Sigma)$ which is
transversal to $Y_0\colon=\mathrm{span}\big\{f^{\mathbf 0,\Lambda_0}_1,f^{\mathbf 0,\Lambda_0}_2,f^{\mathbf 0,\Lambda_0}_3\big\}$;
for instance, $X$ can be taken to be the $L^2$-orthogonal of $Y_0$ relatively to the volume element of $g_{\Lambda_0}$.
Again, by continuity, we can assume that $X$ is transversal\footnote{%
Note that transversality, i.e., $X+Y_{\varphi,\Lambda}=C^{2,\alpha}(\Sigma)$,  also implies $X\cap Y_{\varphi,\Lambda}=\{0\}$,
by a dimension argument.} to
$Y_{\varphi,\Lambda}:=\mathrm{span}\big\{f^{\varphi,\Lambda}_1,f^{\varphi,\Lambda}_2,f^{\varphi,\Lambda}_3\big\}$, for all $\big(\varphi,[\Lambda]\big)\in\mathfrak U_0\times\mathfrak V_0$.

For all $\Lambda$, the group $G_\Lambda=\mathds R^3/\Lambda$ acts isometrically on $(\mathds T^3,g_{\Lambda_s})$
by translation, and this defines a smooth action on the set of embeddings of $\Sigma$ into $\mathds T^3$.
Passing to the quotient by the action of the diffeomorphism group of $\Sigma$, we have a continuous
action on the set of unparameterized embeddings, and therefore a local action\footnote{More precisely, the
definition of the local action of $G_\Lambda$ on $\mathfrak U_0$ is as follows. For $\mathfrak t\in\mathds R^3/\Lambda$ close to $0$, and $\varphi\in\mathfrak U_0$, consider the embedding $y=\mathfrak t+x_{\varphi,\Lambda}$. There exists a unique $\varphi'\in\mathfrak U_0$ such that $x_{\varphi',\Lambda}$ is
a reparameterization of $y$. Then, $\mathfrak t\cdot\varphi=\varphi'$.} on the open set  $\mathfrak U_0$.
 The $G_\Lambda$-orbit of every smooth embedding, and in particular, of any minimal embedding, is a smooth
 submanifold of $\mathfrak U_0$; details of the proof of this fact can be found in \cite{AliPic10}.
 Note that the $G_\Lambda$-orbit of an (unparameterized) embedding $x$ is precisely the set of
 (unparameterized) embeddings that are $\Lambda$-congruent to $x$.
 Given $[\Lambda]\in\mathfrak V_0$ and any smooth function $\varphi\in\mathfrak U_0$, the
 tangent space at $\varphi$ of the $G_\Lambda$-orbit of $\varphi$ is exactly the $3$-dimensional
 space $Y_{\varphi,\Lambda}$. By transversality, if $\mathfrak U_0$ is small enough, we can assume
 that for every smooth $\varphi\in\mathfrak U_0$, there is a unique intersection point $\varphi^\Lambda$ between the orbit  $G_\Lambda\cdot\varphi$ and $X\cap\mathfrak U_0$.
 The path $s\mapsto\varphi_s^{\Lambda_{(s)}}$ is continuous,
 and up to replacing $\varphi_s$ with
 $\varphi_s^{\Lambda_{(s)}}$, we can therefore assume that $\varphi_s\in X\cap\mathfrak U_0$ for all $s$.
 This settles  assumption (B)  in Section~\ref{sub:bifresult}.

 Finally, there is a correspondence between zeros of the function $\widetilde{\mathcal H}$ in $\mathfrak U_0\times\mathds R^3\times\mathfrak V_0$, defined in \eqref{eq:centralmap}, and its restriction to $(X\cap\mathfrak U_0)\times\mathds R^3\times\mathfrak V_0$: if $\big(\varphi,[\Lambda]\big)\in\mathfrak U_0\times\mathfrak V_0$ is such that $\widetilde{\mathcal H}(\varphi,0,0,0,\Lambda)=0$, i.e., $x_{\varphi,\Lambda}$ is a (smooth) $g_\Lambda$-minimal embedding,
 then also $\widetilde{\mathcal H}(\varphi^\Lambda,0,0,0,\Lambda)=0$.\smallskip

In conclusion, the argument above shows  that $\Lambda$-congruence classes of $g_\Lambda$-minimal embeddings of $\Sigma$ into $\mathds T^3$ are into 1-1 correspondence with zeros of the function $\widetilde{\mathcal H}$ in  $(X\cap\mathfrak U_0)\times\mathds R^3\times\mathfrak V_0$.
The aimed bifurcation result will then be proved in this context, and it will be obtained as a direct
application of a classical bifurcation theorem for Fredholm operators, see \cite[Theorem~II.4.4, p.\ 212]{Kiel2012}.
A precise statement of this theorem is recalled in Appendix~\ref{app:technical}.\smallskip

 If $s\in\left[-\varepsilon,\varepsilon\right]$ is such that $\widetilde{\mathcal H}_{\Lambda_s}(\varphi_s,0,0,0)=0$, then the restriction of $T_{s}=\mathrm d\widetilde{\mathcal H}_{\Lambda_s}(\varphi_s,0,0,0)$
 to $X\times\mathds R^3$, denoted by $\overline T_s$ see \eqref{eq:linearmapT}, is  a Fredholm operator of index $0$,
 which settles assumption (C1) in Section~\ref{sub:bifresult}.
 This follows easily from Proposition~\ref{eq:FredholmnessT}, since $X$ is transversal to $Y_{\varphi_s,\Lambda_{(s)}}$, which is a $3$-dimensional
 subspace of $\mathrm{Ker}(T_s)$. By the same argument, Proposition~\ref{eq:FredholmnessT}
 says that $x_{\varphi_s,\Lambda_s}$ is an equivariantly nondegenerate
 $g_{\Lambda_{(s)}}$-minimal embedding if and only if $\overline T_s$  is
 an isomorphism. Thus, assumption (a) implies that, for $s\ne0$, $\overline T_s$ is nonsingular,
 which settles assumption (BT1) in Section~\ref{sub:bifresult}.\smallskip

Let us the injective continuous linear map%
\begin{equation}\label{eq:identification}
X\oplus\mathds R^3\ni(\psi,b_1,b_2,b_3)\longmapsto\psi+\sum_{i=1}^3b_if_i^{\mathbf 0,\Lambda_0}\in C^{0,\alpha}(\Sigma)
\end{equation}%
to identify $X\oplus\mathds R^3$ with a subspace\footnote{Using the identification \eqref{eq:identification},
the operator $\overline T_0\colon C^{2,\alpha}(\Sigma)\to\ C^{0,\alpha}(\Sigma)$ is given by
$J_{\mathbf 0,\Lambda_0}+P_{0}$, where $P_{0}\colon X\oplus Y_0\to Y_0$ is the projection.} of $C^{0,\alpha}(\Sigma)$,
as in assumption (A) in Section~\ref{sub:bifresult}.
Since $\overline T_0$ is Fredholm,  then $0$ is an isolated eigenvalue of $\overline T_0$ (assumption (D) in Section~\ref{sub:bifresult}), and it
has finite multiplicity, given by  $m=\mathrm{dim}\big(\mathrm{Ker}(T_0)\big)-3>0$.
Notice that $\overline T_0$ is diagonalizable: it coincides with the Jacobi operator $J_{\mathbf 0,\Lambda_0}$ on $X$,
which is $J_{\mathbf 0,\Lambda_0}$-invariant, and it is the identity on $\mathds R^3$. In particular,
the generalized $0$-eigenspace of $T_0$, i.e., $E_0\colon=\bigcup_{k\ge1}\mathrm{Ker}(\overline T_0^k)$, coincides
with the kernel of $\overline T_0$, given by $\mathrm{Ker}(J_{\mathbf 0,\Lambda_0})\cap X$.
\smallskip

Using the identification \eqref{eq:identification}, the operators $\overline T_s$ can be seen as unbounded linear operators
on $C^{0,\alpha}(\Sigma)$, with domain $C^{2,\alpha}(\Sigma)$. As such, they are \emph{closed} operators, i.e., they have
closed graphs. This follows easily observing that they are finite rank perturbations\footnote{The sum of a closed and a bounded
operator is closed.} of the self-adjoint elliptic operators of
second order $J_{\varphi_s,\Lambda_{(s)}}$, which are closed (see for instance \cite[Section~III.1]{Kiel2012}).
This settles assumption (C2) in Section~\ref{sub:bifresult}.
\smallskip

Let us now show that the path of Fredholm operators $\overline T_s$ has an odd crossing number at $s=0$
using assumption (c).

To this aim, let us consider the continuous path of Fredholm operators $\overline T'_s=J_{\varphi_s,\Lambda_{(s)}}+P_s$,
where $P_s:C^{2,\alpha}(\Sigma)\cong X_s\oplus Y_s\to Y_s$ is the projection, and $X_s$ is the $L^2$-orthogonal
complement of $Y_s$ relatively to the metric $g_{\Lambda_{(s)}}$. Observe that:
\begin{enumerate}
\item[(i)] $\overline T'_0=\overline T_0$;
\item[(ii)] $\overline T'_s$ is invertible for all $s\ne0$;
\item[(iii)] $\overline T'_s$ is diagonalizable with real eigenvalues, and for $s\ne0$, its spectrum
$\mathrm{spec}(\overline T_s')$ coincides with $\big(\mathrm{spec}(J_{\varphi_s,\Lambda_{(s)}})\setminus\{0\}\big)\bigcup\{1\}$;
\item[(iv)] $\overline T_s'$ has an odd crossing number at $s=0$, by assumption (c).
\end{enumerate}
Statement (iv) follows easily from (iii).
In order to conclude that also the family $\overline T_s$ has an odd crossing number at $s=0$, it suffices
to show that there exists $r$ arbitrarily small and $s=s(r)$ such that the isomorphisms $\overline T_s$ and $\overline T'_s$ are
endpoints of a continuous path of invertible operators that remain inside the ball $B(\overline T_0,r)$ of radius $r$ centered
at $\overline T_0$, see Remark~\ref{thm:simplifyingobs}.

The difference $\overline T'_s-\overline T_s$ is equal to:
\begin{itemize}
\item $P_s$ on $X_0$;
\item $(J_{\varphi_s,\Lambda_{(s)}}-J_{\mathbf 0,\Lambda_0})+(P_s-\mathrm{I}_s)$ on $Y_0$,
\end{itemize}
where $I_s\colon Y_0\to Y_s$ is the isomorphism defined by $\mathrm I_s\big(f_i^{\mathbf 0,\Lambda_{(0)}}\big)=f_i^{\varphi_s,\Lambda_{(s)}}$,
$i=1,2,3$. Since $\lim\limits_{s\to0}P_s=P_0$ and $\lim\limits_{s\to0}J_{\varphi_s,\Lambda_{(s)}}=J_{\mathbf 0,\Lambda_0}$ in the operator norm, then:
\[\lim_{s\to0}\Big\Vert P_s\big\vert_{X_0}\Big\Vert=0,\quad\text{and}\quad\lim_{s\to0}\Big\Vert\big[(J_{\varphi_s,\Lambda_{(s)}}-J_{\mathbf 0,\Lambda_0})+(P_s-\mathrm{I}_s)\big]\big\vert_{Y_0}\Big\Vert=0.\]
Observe that $P_s$ is an operator of rank $3$, and $Y_0$ has dimension $3$.
In other words, $\overline T'_s$ is the sum $\overline T_s+R_s$, with $R_s$ a finite rank operator such that
$\lim\limits_{s\to0}\Vert R_s\Vert=0$. This implies easily that, given any $r>0$, there exists $s=s(r)$ such
that both $\overline T_s$ and $\overline T_s'$ belong to the ball $B(\overline T_0,r)$, and
that there exists a continuous path of invertible operators in $B(\overline T_0,r)$ joining $\overline T_s$ and
$\overline T_s'$. This shows that the family $\overline T_s$ has an odd crossing number at $s=0$ (assumption (BT2) in Section~\ref{sub:bifresult}), and concludes the proof.
\end{proof}
\begin{remark}\label{thm:remLinIndip}
Assumption~(b) in Theorem~\ref{thm:mainbiftheorem} is always satisfied when the genus of $\Sigma$ is greater than $1$.
There is a number of ways to prove this fact, here we propose the most elementary one. Recalling the definition of the
Jacobi field $f_i^{\mathbf 0,\Lambda_0}$ in Section~\ref{sub:notations}, observe that they are linearly independent if and only
if  some non zero constant (i.e., translation invariant) vector field of $\mathds R^3/\Lambda_0$ is everywhere tangent to
$S=x_0(\Sigma)$.
\begin{lemma}
Let $S$ be an embedded submanifold of $\mathds T^3$ which is diffeomorphic to a closed orientable surface of genus $\mathrm{gen}(S)>1$. Then, no nontrivial constant vector field of $\mathds T^3$ is everywhere tangent to $S$.
\end{lemma}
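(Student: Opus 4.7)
The plan is to argue by contradiction and reduce the statement to a statement about the Euler characteristic of $S$ via the Poincar\'e--Hopf theorem. Suppose, for contradiction, that some nontrivial constant vector field $V$ on $\mathds T^3$ is everywhere tangent to $S$. The key point I want to exploit is that a nonzero constant vector field on $\mathds T^3$ has no zeros whatsoever (its flow consists of fixed-point-free translations), so the restriction $V\vert_S$ is automatically a nowhere-vanishing smooth tangent vector field on $S$.

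Next I would invoke the Poincar\'e--Hopf index theorem: the existence of a nowhere-vanishing tangent vector field on a closed orientable surface $S$ forces $\chi(S)=0$. On the other hand, the genus hypothesis gives $\chi(S)=2-2\,\mathrm{gen}(S)\le -2<0$, contradicting $\chi(S)=0$. This contradiction completes the argument.

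There is essentially no ``hard step'' here: the entire argument rests on the elementary observation that a constant vector field on a torus never vanishes, together with a standard topological obstruction. The only minor point to mention carefully is that tangency of $V$ to $S$ is a pointwise condition, so the restriction $V\vert_S$ is a genuine section of $TS$, allowing Poincar\'e--Hopf to apply directly without any further regularity or global considerations.
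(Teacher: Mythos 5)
Your proof is correct and is essentially the same as the paper's: both observe that a nontrivial constant vector field on $\mathds T^3$ is nowhere vanishing, so tangency would yield a nowhere-vanishing section of $TS$, contradicting $\chi(S)=2-2\,\mathrm{gen}(S)<0$ via Poincar\'e--Hopf.
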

\begin{proof}
Nontrivial constant vector fields are never vanishing.  But the Euler characteristic
of $S$ is $2-2\,\mathrm{gen}(S)<0$, so there are no never vanishing vector fields on $S$ by the
Poincar\'e--Hopf theorem.
\end{proof}
More on the geometry of minimal submanifolds in flat tori can be found in reference \cite{NagSmy75}.
\end{remark}
\section{Proof of Theorems A, B, and C}
\label{sec:applications}
\begin{figure}
\includegraphics[scale=.35]{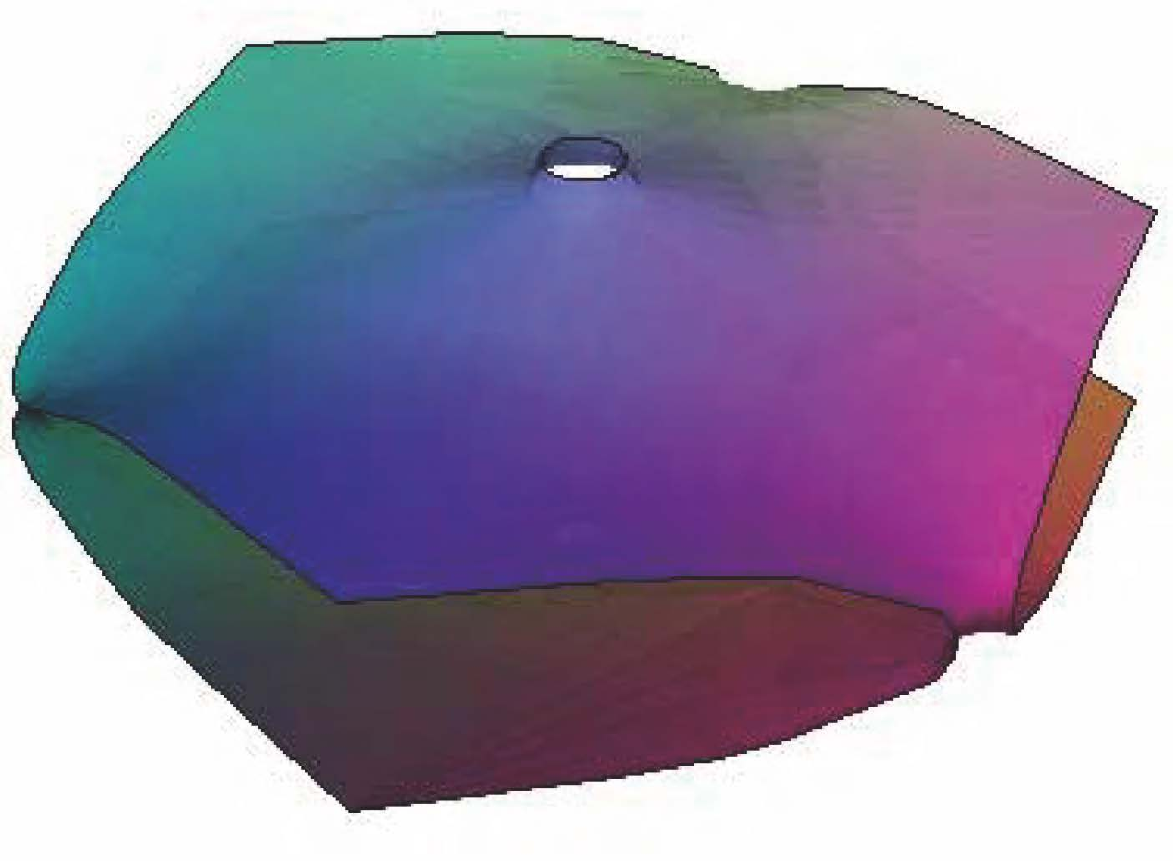}\includegraphics[scale=0.35]{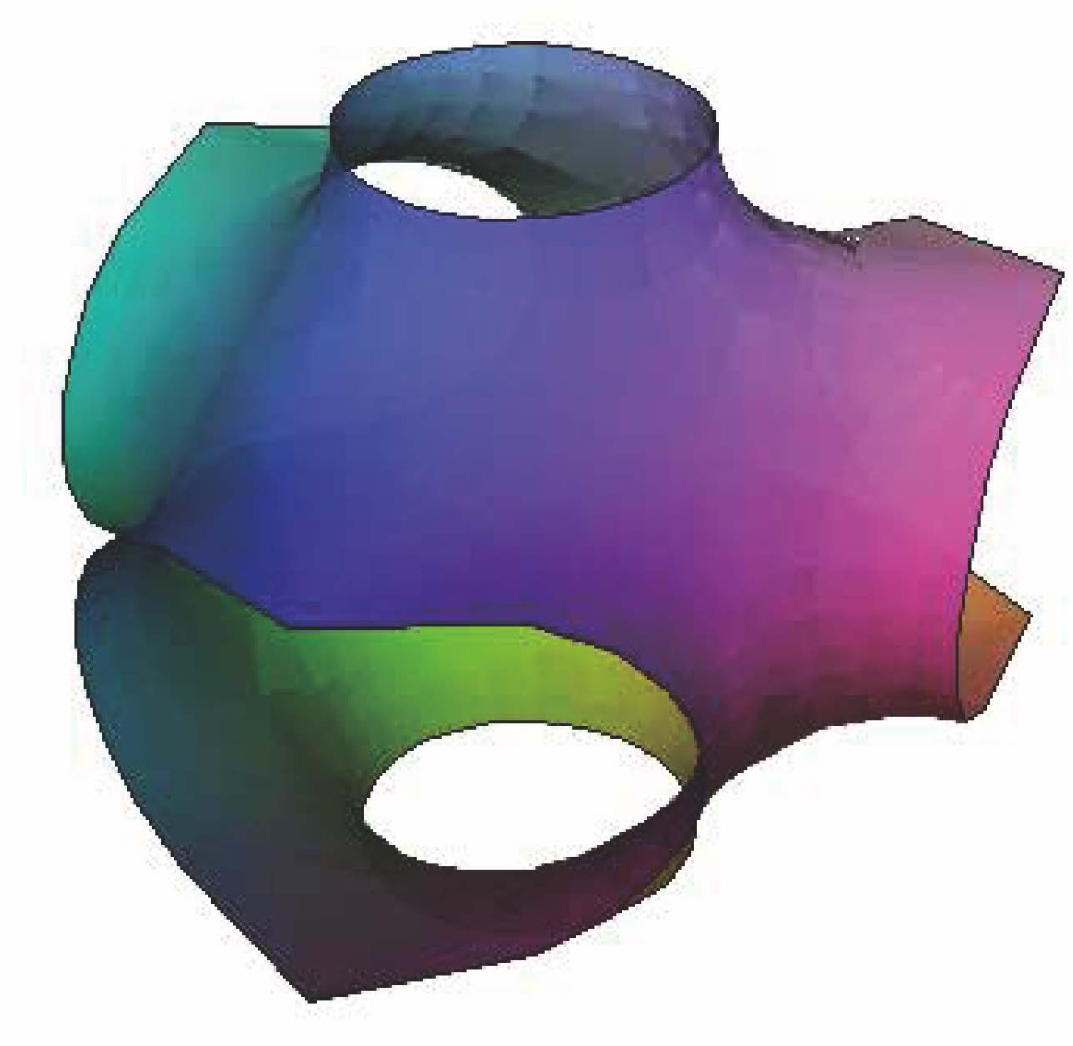}
\includegraphics[scale=0.35]{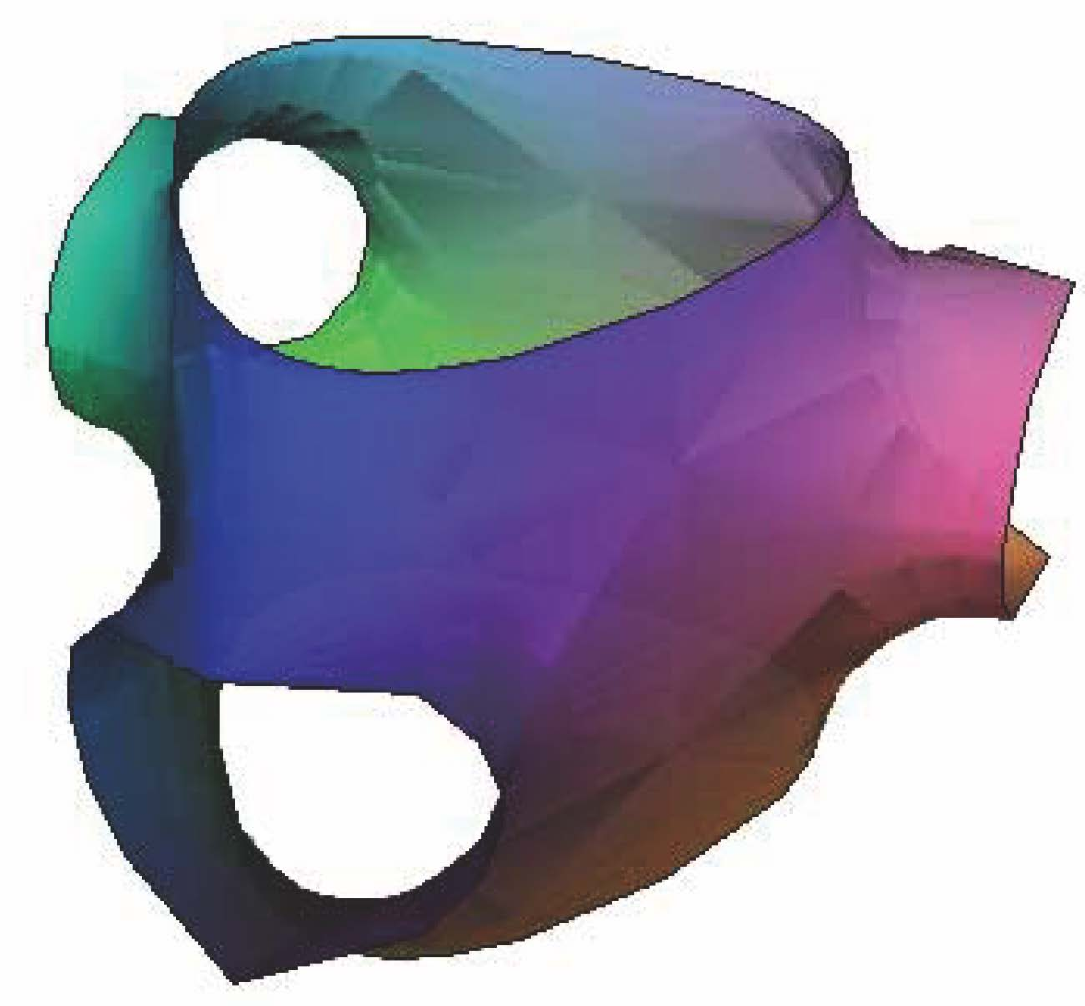}
\caption{Surfaces $M_a$ of the H-family, with $a=0.1$ (Morse index $2$), $a=0.5$ (Morse index $1$),
and $a=0.9$ (Morse index $3$). }
\label{fig:Hfamily}
\end{figure}
We can apply Theorem~\ref{thm:mainbiftheorem} and the results of \cite{E-S} to conclude the proof of Theorem~A.
\begin{proof}[Proof of Theorem~A]
For the H-family $(M_a)_{a\in\left]0,1\right[}$, see Example~\ref{exa:H}
and \cite[Main Theorem~1]{E-S}, the nullity of every
surface except for $a_0\approx 0.49701$ and $a_1\approx 0.71479$ is equal to $3$.
The Morse index of $M_a$ is equal to $2$ for $a\in\left]0,a_0\right[$, it is equal to $1$ for $a\in\left]a_0,a_1\right[$, and it is equal to $3$ for $a\in\left]a_1,1\right[$.
Thus, there is an odd jump at $a_0$, which is a bifurcation instant. We cannot infer the existence
of bifurcation at $a_1$, where the Morse index has an even jump.

As to the rPD-family $(M_a)_{a\in\left]0,\infty\right[}$, see Example~\ref{exa:rPD} and
\cite[Main Theorem~2]{E-S}, there are two odd jumps of the Morse index at $a_1\approx 0.494722$,  $a_2 = (a_1)^{-1}\approx 2.02133$.
The nullity at every other instant is equal to $3$.

For the tP-family and the tD-family, see Example~\ref{exa:tPtD} and \cite[Main Theorem~3]{E-S},
there are two odd jumps of the Morse index:
$a_1, a_2 \in \left]2,+\infty\right[$
of the Morse index, where $a_1\approx 7.40284$ and $a_2\approx 28.7783$. The nullity at every other instant is equal to $3$.

Observe that assumption (b) of Theorem~\ref{thm:mainbiftheorem} is satisfied in all cases, because
all the minimal surfaces have genus equal to $3$, see Remark~\ref{thm:remLinIndip}.
\end{proof}
\begin{example}[H-family --- Figure~\ref{fig:Hfamily}]
\label{exa:H}
For $a\in\left]0,1\right[$, let $M_a$ be a hyperelliptic Riemann surface of genus $3$
defined by $w^2=z(z^3-a^3) \left( z^3-\frac{1}{a^3} \right)$ and $f$ a conformal
minimal immersion given by
\[f(p)=\Re \int^p_{p_0} i\big(1-z^2,\,i(1+z^2),\,2 z\big)^\mathrm{t}\, \dfrac{d z}{w}.\]
$f(M_a)$ is called H-family.
\end{example}
\begin{example}[rPD family, Karcher's TT surface, see ref.~ \cite{Karcher89} --- Figure~\ref{fig:rPdfamily}]
\label{exa:rPD}
For $a \in\left]0, \infty\right[$, let $M_a$ be a hyperelliptic Riemann surface of genus $3$
defined by $w^2=z(z^3-a^3) \left( z^3+\frac{1}{a^3} \right)$ and $f$ a conformal
minimal immersion given by
\[f(p)=\Re \int^p_{p_0} \big(1-z^2,\,i(1+z^2),\,2 z\big)^\mathrm{t}\, \dfrac{d z}{w}.\]
$f(M_a)$ is called rPD family or Karcher's TT surface.
$M_{\sqrt{2}}$ gives the so-called Schwarz Primitive surface (Schwarz P surface),
and $M_{1/\sqrt{2}}$ gives the so-called Schwarz Diamond surface (Schwarz D surface).
\begin{figure}
\includegraphics[scale=.3]{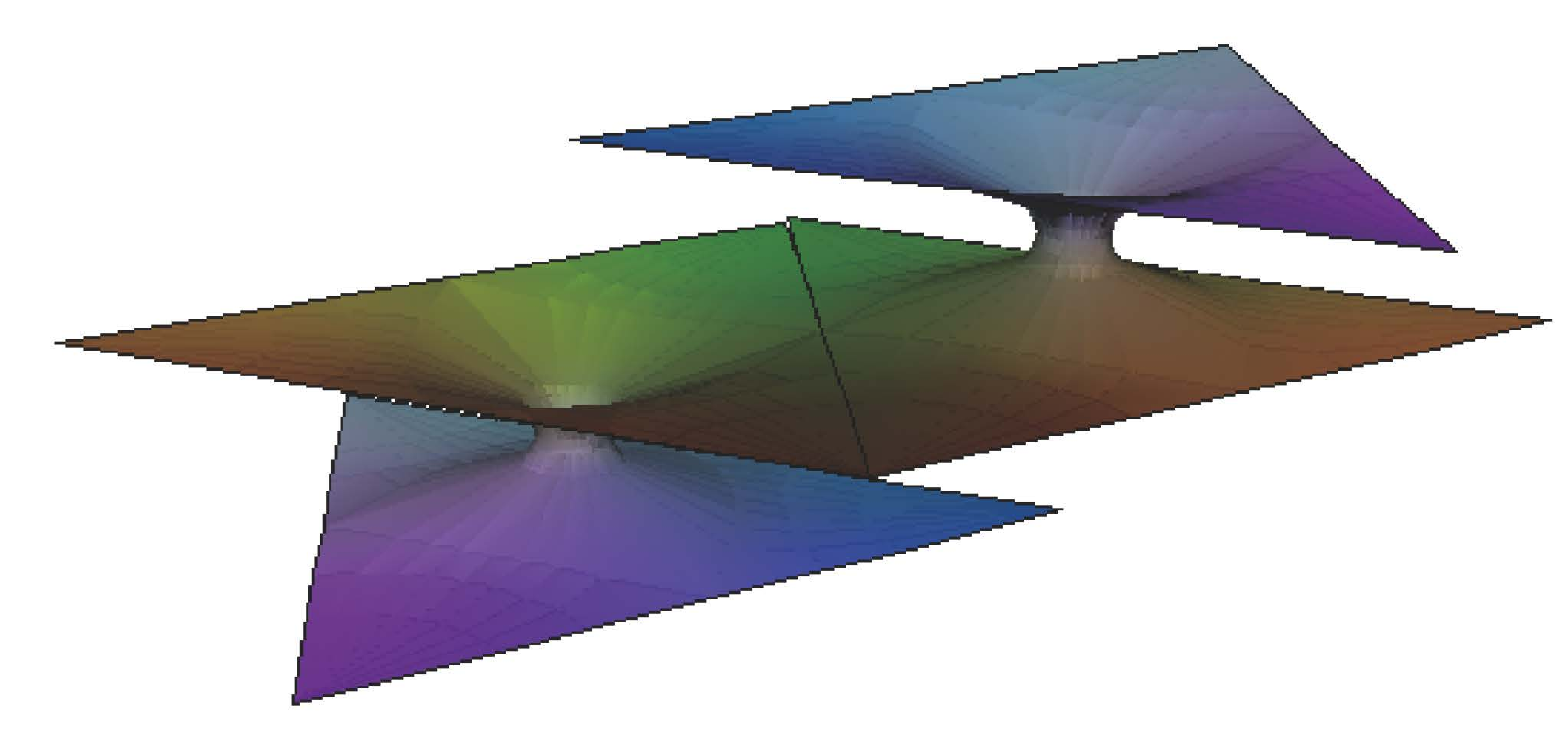}\quad\includegraphics[scale=.4]{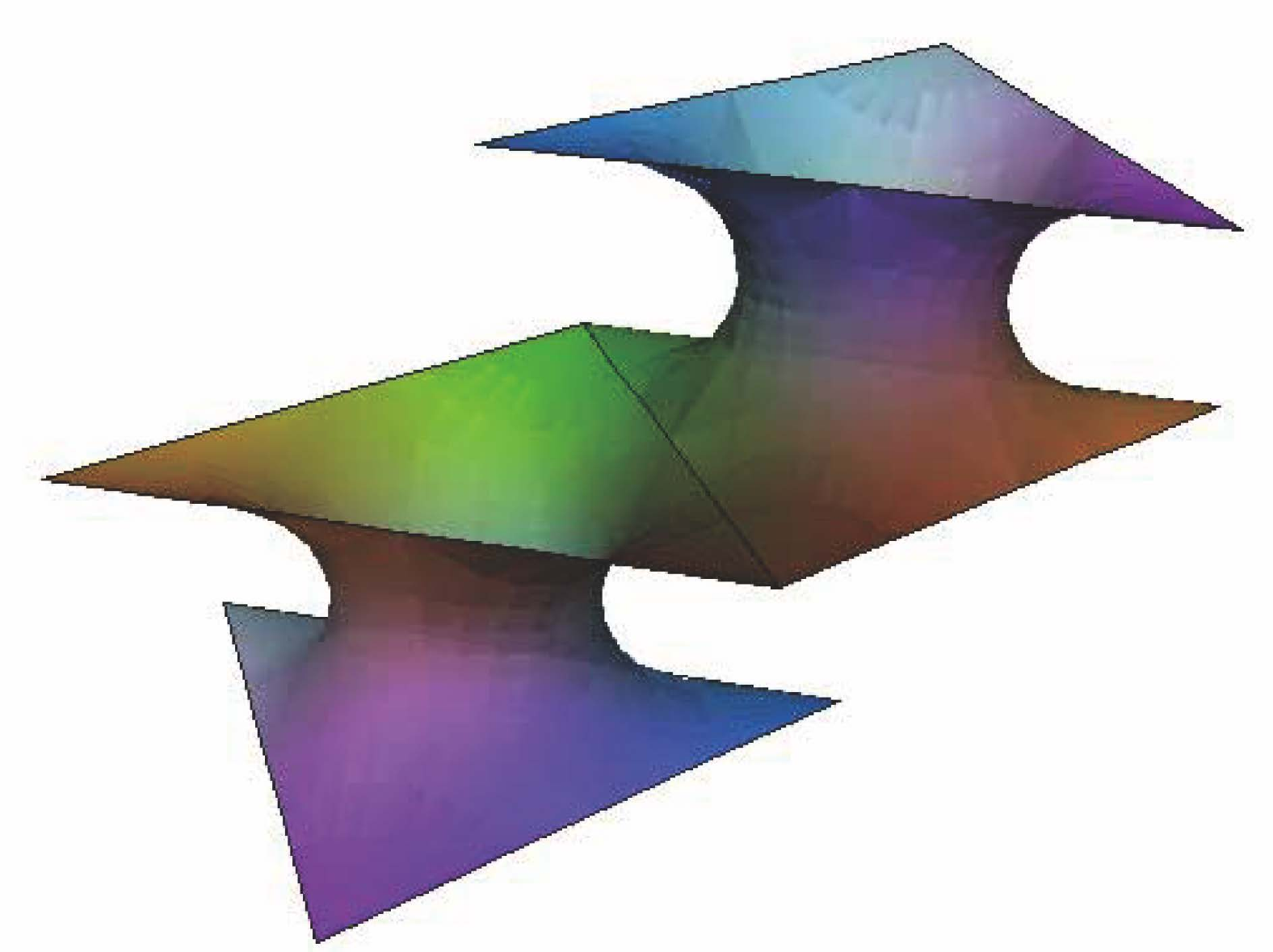}
\caption{Surfaces $M_a$ of the rPD-family, with $a=0.1$ (Morse index $2$), and $a=0.5$ (Morse index $1$).}
\label{fig:rPdfamily}
\end{figure}
\end{example}
\begin{example}[tP-family -- Figure~\ref{fig:tPfamily}, tD-family -- Figure~\ref{fig:tDfamily}]
\label{exa:tPtD}
For $a \in\left]2,+\infty\right[$, let $M_a$ be a hyperelliptic Riemann surface of genus $3$
defined by $w^2=z^8+a z^4+1$. Let $f$ be a conformal
minimal immersion given by
\[f(p)=\Re \int^p_{p_0} \big(1-z^2,\,i(1+z^2),\,2 z\big)^\mathrm{t}\, \dfrac{d z}{w}\]
and $f'$
\[f'(p)=\Re \int^p_{p_0} i\big(1-z^2,\,i(1+z^2),\,2 z\big)^\mathrm{t}\, \dfrac{d z}{w}.\]
$f(M_a)$ is called tP-family and $f'(M_a)$ is called tD-family.
$f(M_{14})$ gives the Schwarz P surface,
and $f'(M_{14})$ gives the Schwarz D surface.
\begin{figure}
\includegraphics[scale=.5]{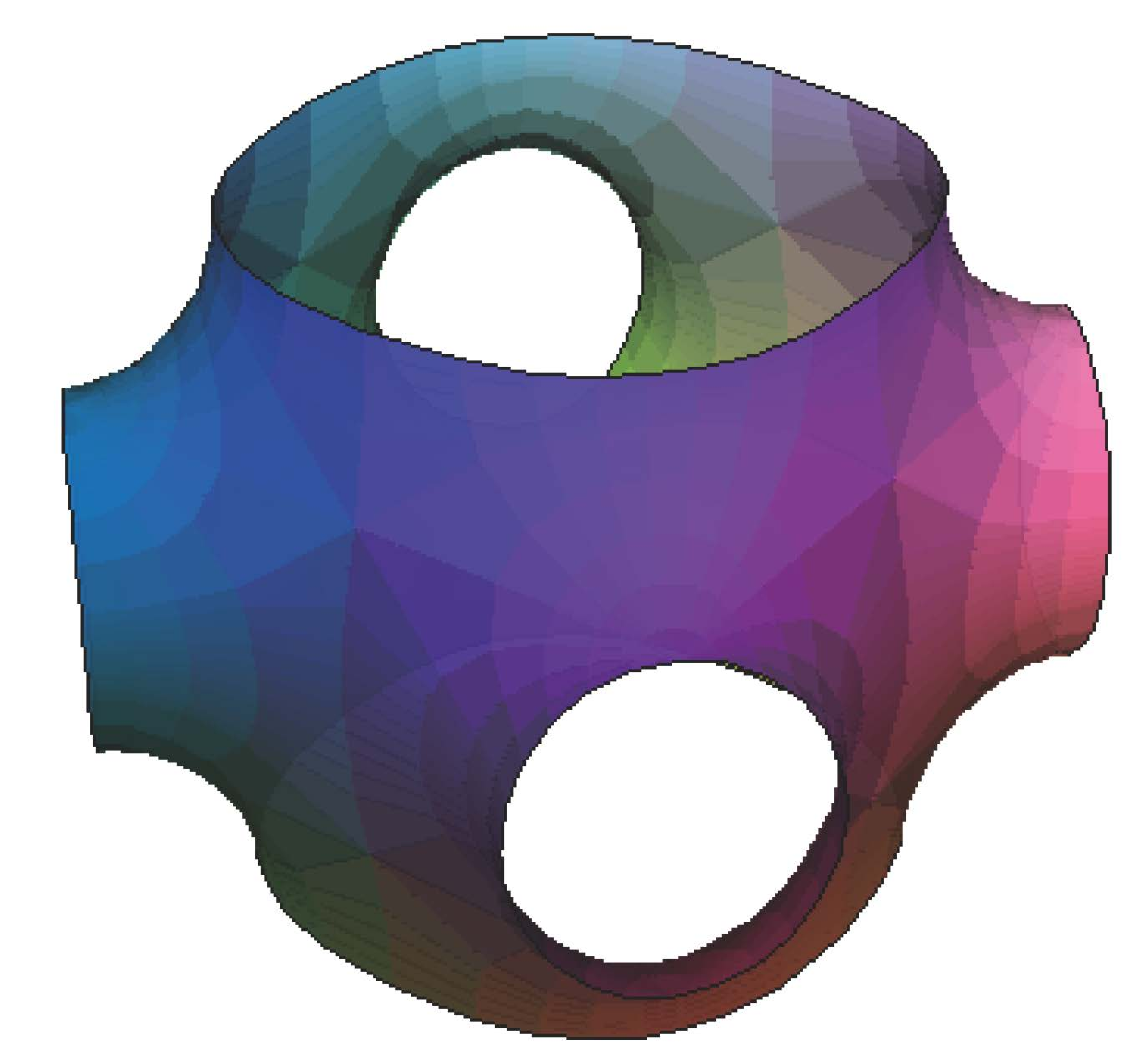}\quad\includegraphics[scale=.5]{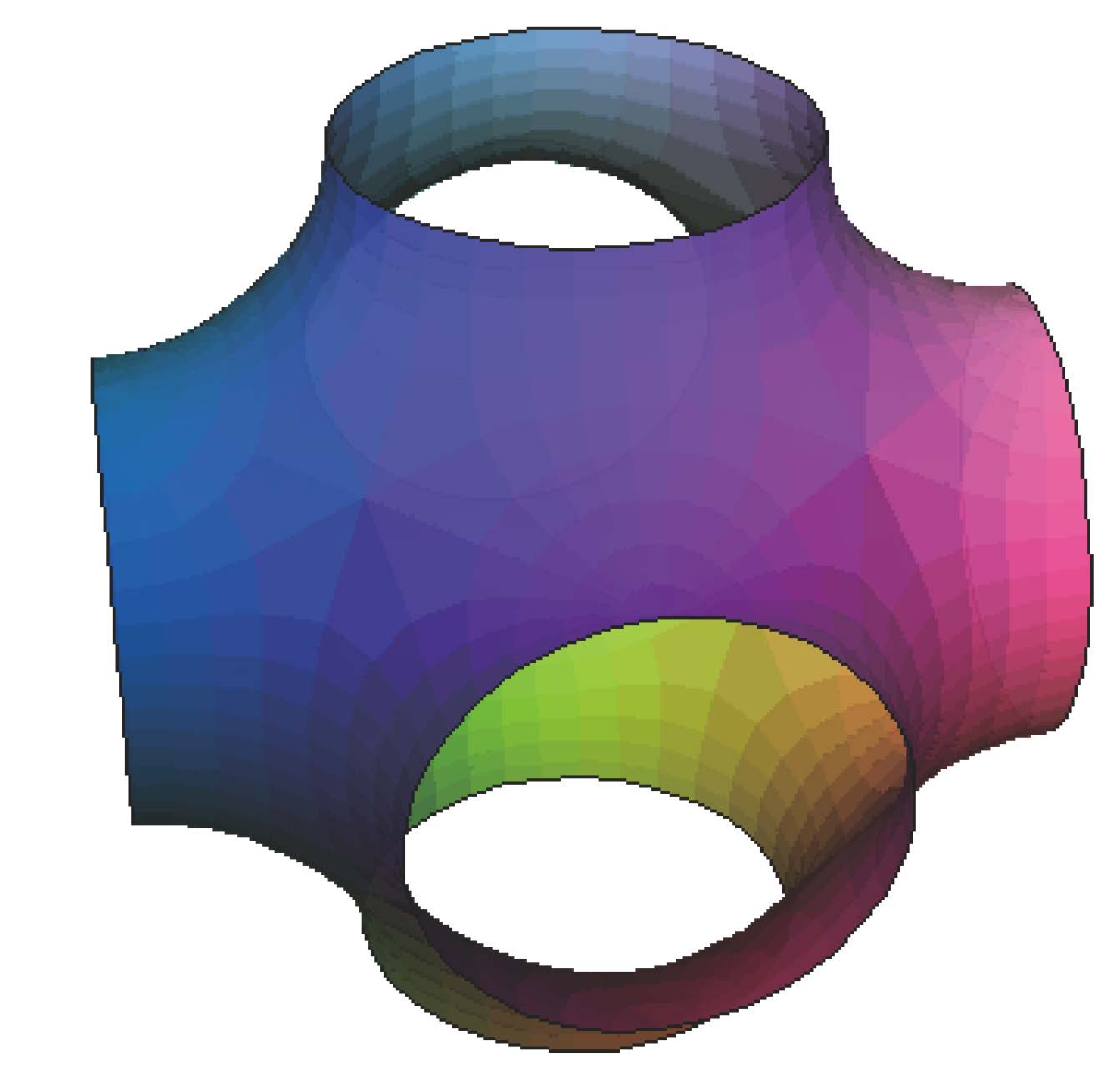}
\caption{Surfaces $M_a$ of the tP-family, with $a=0.285$ (Morse index $2$), and $a=14$ (Morse index $1$). The surface $M_{14}$ of the tP-family is also called \emph{Schwarz Primitive} surface, or \emph{P-surface}.}
\label{fig:tPfamily}
\end{figure}
\end{example}

As to the local rigidity, we can apply Theorem~\ref{thm:IFT} to the above families.
We have seen that there are exactly two surfaces that are not equivariantly nondegenerate in each of the rPD-family, the H-family, the tP-family and the tD-family.
There is a fifth family of triply periodic minimal surfaces, called the  tCLP-family
(see Example~\ref{exa:tCLP}), which consists of equivariantly nondegenerate surfaces.
\begin{figure}
\includegraphics[scale=.4]{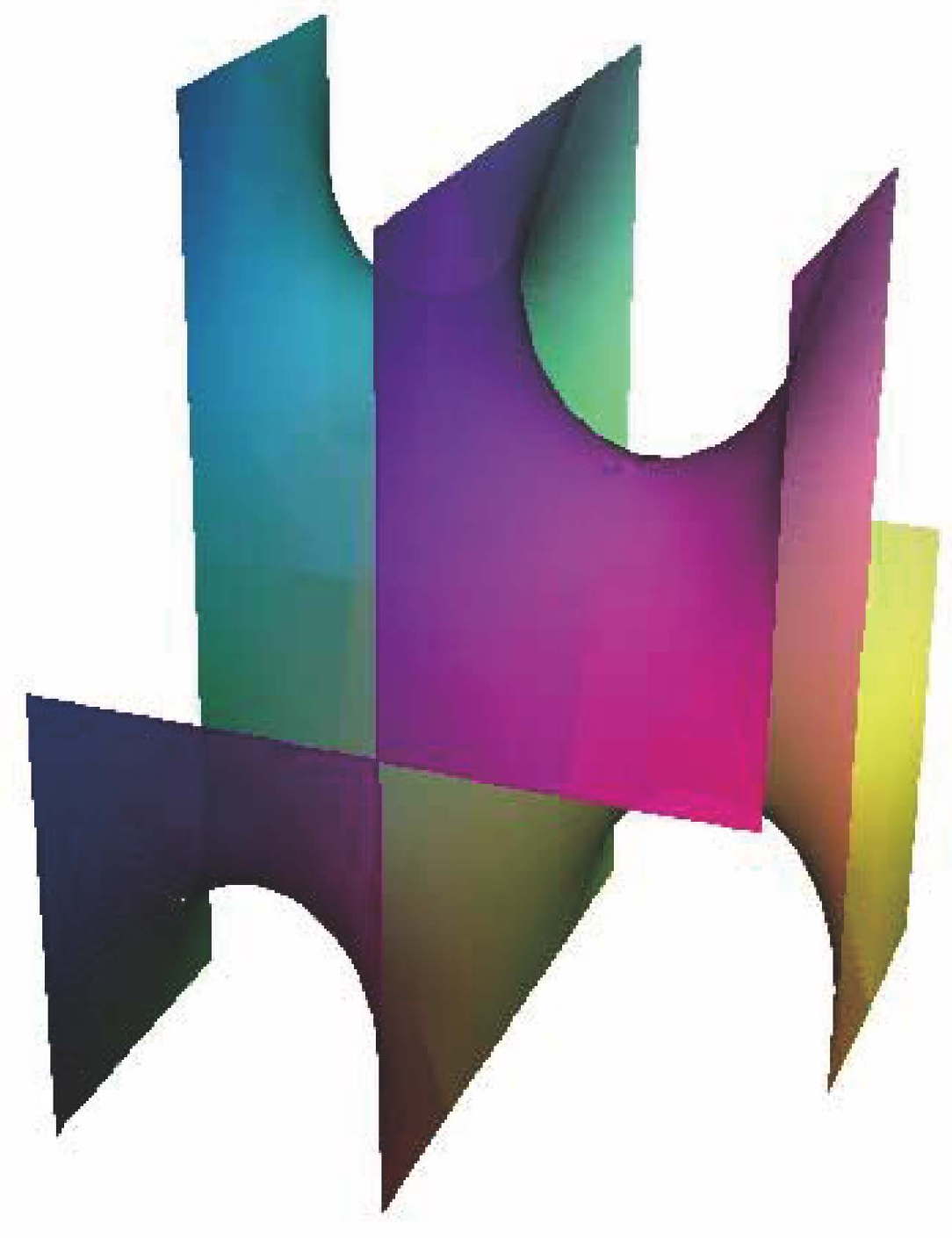} \includegraphics[scale=.4]{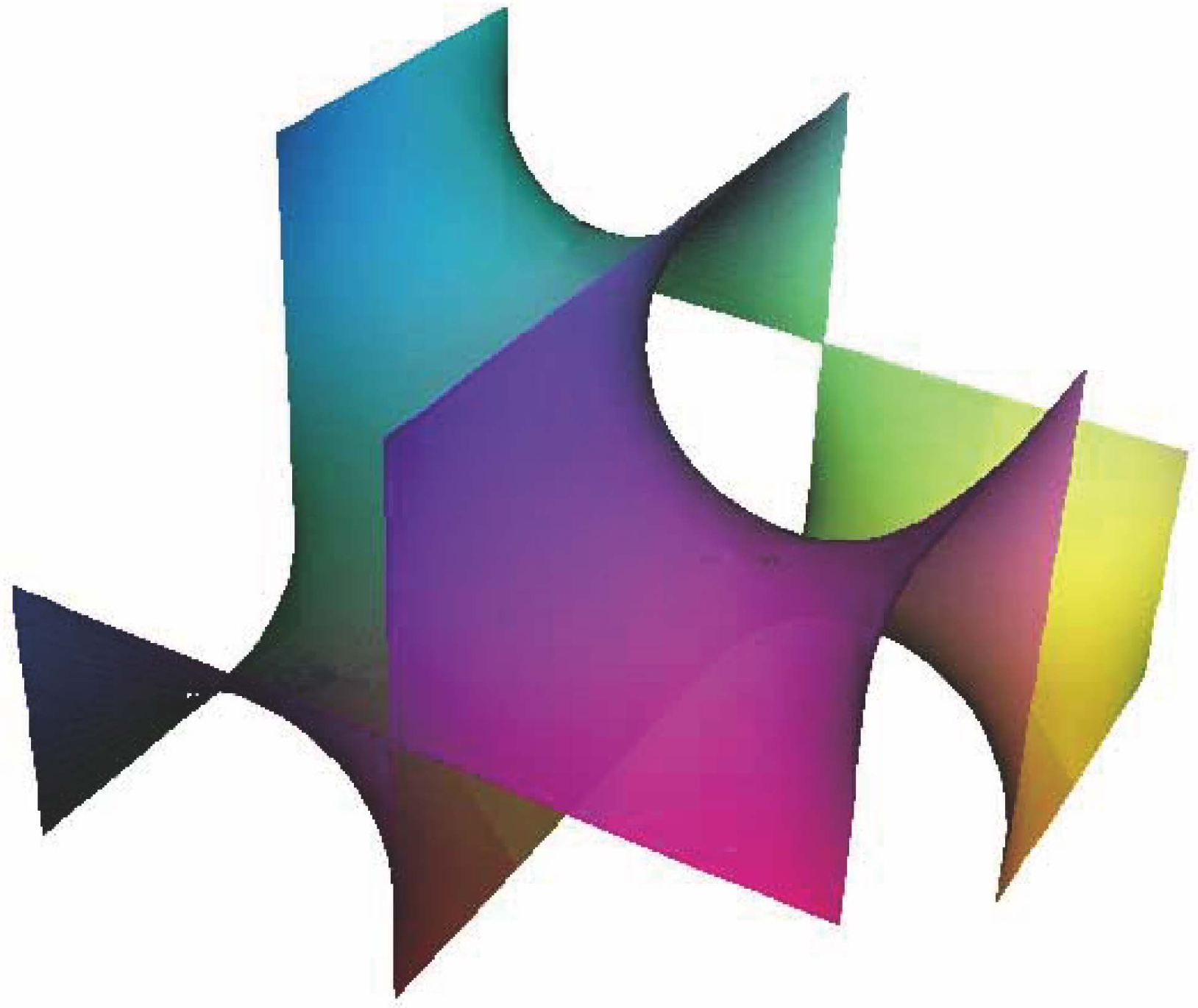}
\caption{Surfaces $M_a$ of the tD-family, with $a=2.85$ (Morse index $2$), and $a=14$ (Morse index $1$).}
\label{fig:tDfamily}
\end{figure}
\begin{proof}[Proof of Theorem~B]
As we have observed in Remark~\ref{thm:remLinIndip}, the linear independence assumption of Theorem~\ref{thm:IFT} is always satisfied in the case of all
minimal surfaces of the given families. This implies that equivariantly nondegenerate surfaces correspond precisely to minimal surfaces with nullity equal to $3$. Each such surface belongs to a unique smooth family of triply periodic minimal
surfaces, parameterized by isometry classes of flat metrics in $\mathds T^3$ (a $6$-dimensional space), by
Theorem~\ref{thm:IFT}. If we consider isometry classes of flat metrics with fixed volume, which are thus pairwise non-homothetic, we get a smooth $5$-parameter family of pairwise non-homothetic triply periodic minimal surfaces.
\end{proof}
\begin{remark}
By the proof of Theorem A, Examples \ref{exa:rPD} and \ref{exa:tPtD}, and Theorem B,
we can make the following observation.
Schwarz P-surface has nullity equal to $3$, and it is contained both in the rPD-family and in the tP-family,
which are two one-parameter families of triply periodic minimal surfaces.
Near the P-surface, these two families consist of surfaces that are pairwise non-homothetic.
On the other hand, by Theorem~B, the P-surface belongs to a (unique up to homotheties)
smooth locally rigid $5$-parameter family of pairwise non-homothetic triply periodic minimal surfaces.
By the local rigidity, such $5$-parameter family must contain (a portion of) the rPD-family and the tP-family, and it would be very interesting to have a geometric description of this $5$-parameter family.
An analogous situation occurs for the Schwarz D-surface, which has nullity equal to $3$, and it is contained both in the rPD-family and in the tD-family.
\end{remark}
\begin{example}[tCLP-family  --- Figure~\ref{fig:tCLPfamily}]\label{exa:tCLP}
For $a \in\left]-2,2\right[$, let $M_a$ be a hyperelliptic Riemann surface of genus $3$
defined by $w^2=z^8+a z^4+1$. Let $f$ be a conformal
minimal immersion given by
\[f(p)=\Re \int^p_{p_0} \big(1-z^2,\,i(1+z^2),\,2 z\big)^\mathrm{t}\, \dfrac{d z}{w}.\]
$f(M_a)$ is called tCLP family.
For all $a$, the minimal surface $M_a$ has constant nullity equal to $3$ and Morse index equal to $3$, see \cite[Main Theorem~4]{E-S}.
\begin{figure}
\includegraphics[scale=.4]{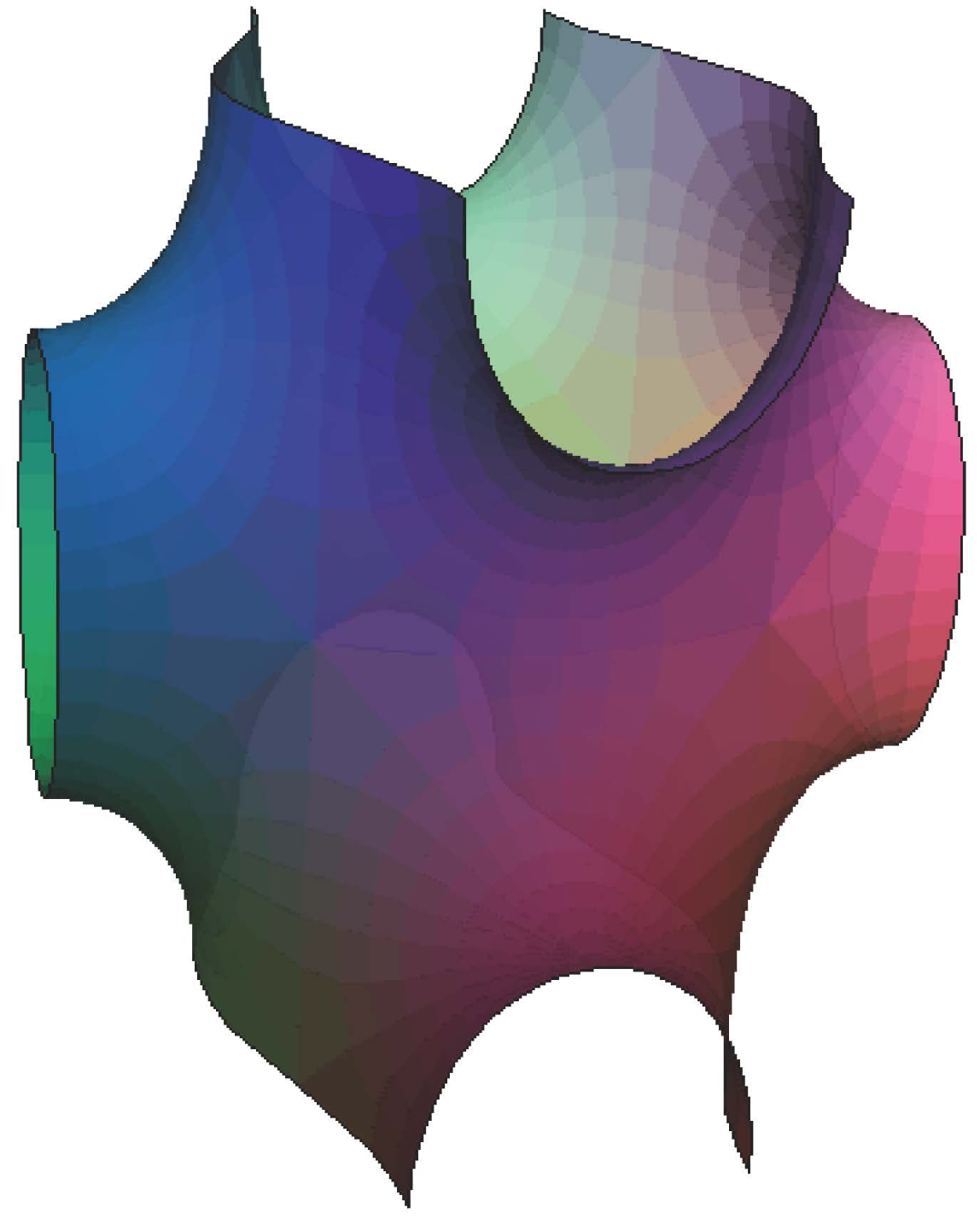}\quad\includegraphics[scale=.4]{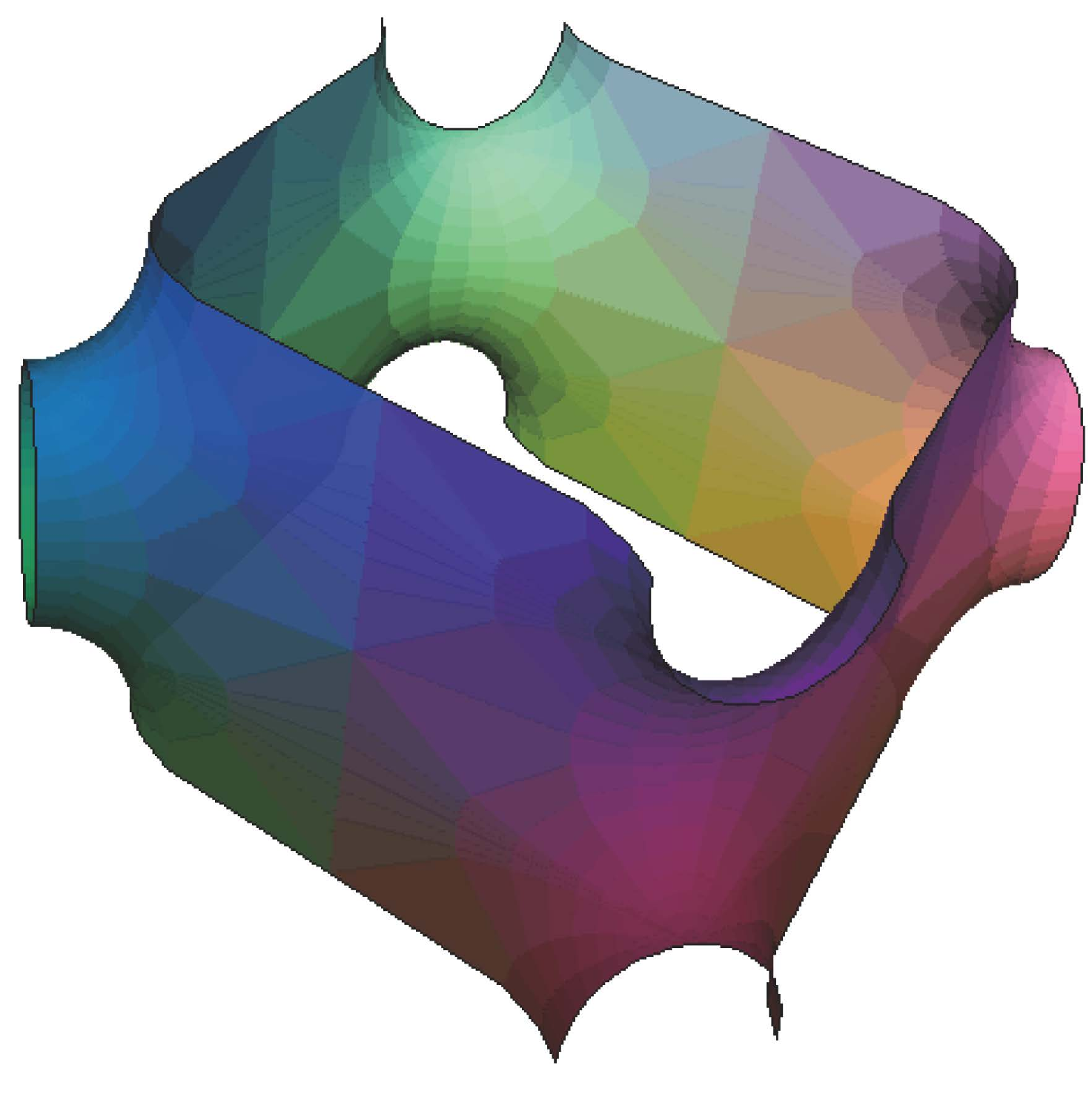}
\caption{Surfaces $M_a$ of the tCLP-family, with $a=0$ (also called the \emph{Schwarz CLP-surface}) and with
$a=1.96$. All the surfaces of this family have Morse index equal to $3$ and nullity equal to $3$.}
\label{fig:tCLPfamily}
\end{figure}
\end{example}

\begin{example}[associate family of Schwarz P-surface]\label{exa:PDG}
Let $M$ be a hyperelliptic Riemann surface of genus $3$
defined by $w^2=z^8+14z^4+1$.
Then the Schwarz P surface is given by
\[p\longmapsto \Re \int_{p_0}^p (1-z^2,\,i(1+z^2),\,2z)^t\dfrac{dz}{w},\]
and the Schwarz D surface is given by
\[ p\longmapsto \Re \int_{p_0}^p i (1-z^2,\,i(1+z^2),\,2z)^t\dfrac{dz}{w}.\]
The general associate surface of the P surface is given by
\begin{equation}\label{associate}
p\longmapsto \Re \int_{p_0}^p e^{i\theta} (1-z^2,\,i(1+z^2),\,2z)^t\dfrac{dz}{w}, \qquad \theta \in \mathds{R}.
\end{equation}
It is known that there is a unique $\theta \in\left ]0, \pi/2\right[$ such that \eqref{associate} gives a triply periodic minimal surface in $ \mathds{R}^3$.
Actually, $\theta \approx0.907313(=51.9852^\circ)$ gives this surface, which is called the Schoen's gyroid (\cite{A.Schoen1970}).
Since these three surfaces have the same Riemann metric:
\[
ds^2=\frac{(1+|z|^2)^2}{|w|^2}|dz|^2,
\]
and thus the same Jacobi operator:
\begin{equation}\label{jacobi}
J=\Delta - 2K
=\frac{4|w|^2}{(1+|z|^2)^4}
\Biggl((1+|z|^2)^2\frac{\partial^2}{\partial z\partial \overline{z}}+2
\Biggr),
\end{equation}
they also have the same nullity and the same Morse index.
Since Schwarz $P$ and $D$ surfaces have nullity $3$ and Morse index $1$ because they are volume-preserving stable (Ross \cite{Ross1992}), the Schoen's gyroid also
has nullity $3$ and Morse index $1$.
\end{example}

\begin{proof}[Proof of Theorem~C]
As the proof of Theorem~B,
it follows immediately from Theorem~\ref{thm:IFT}, since  Schoen's gyroid
has nullity equal to $3$ (Example \ref{exa:PDG}).
\end{proof}

\section{Remarks on the geometry of triply periodic minimal surfaces in the bifurcation branches}
\label{sec:remarks}
In general, Theorem \ref{thm:mainbiftheorem} does not imply\footnote{%
An instructive example of bifurcation by Morse index jump in geometric variational problems that does not produce new solutions is discussed in \cite[Section~2.6]{KoiPalPic14}, in the context of constant mean curvature surfaces.}
 the existence of essentially new triply periodic minimal surfaces, because there we do not assume that the flat metrics $g_{\Lambda_{(s)}}$
are pairwise non homothetic near $s=0$, see Remark~\ref{thm:remgenuinebifurcation}.

In this section we will discuss this question at each bifurcation instant that we obtained in
Theorem~A, and we will determine at which bifurcations instants one obtains the existence of new examples of triply periodic minimal surfaces. Every bifurcation instant given in Theorem~A corresponds to an equivariantly degenerate minimal embedding whose Jacobi operator has kernel spanned by the three
Killing--Jacobi fields (Definition~\ref{thm:defequivnondegeneracy}), and one additional Jacobi field which is not Killing. Such a Jacobi field gives a first order approximation of the bifurcating branch, and it will be used in
our discussion.
The exact equations of these Jacobi fields and how to derive them will be discussed in a forthcoming paper, and will be omitted here.
\subsection{rPD-family}
First, let us look at the bifurcation instants along the rPD-family $M_a$, $a\in\left]0,+\infty\right[$ (Example \ref{exa:rPD}).
Set
$$
A\colon=A(a)\colon=\frac{1}{\sqrt{3}a}\int_0^1\frac{1+a^2t^2}{\sqrt{t(1-t^3)(a^3t^3+\frac{1}{a^3})}}\;dt,
$$
$$
C \colon=C(a) \colon=4\int_0^1\frac{t}{\sqrt{t(1-t^3)(a^3+\frac{t^3}{a^3})}}\;dt.
$$
Then, the lattice is
\[
  \Lambda = \left(
    \begin{array}{ccc}
     3A & 3A & 4A \\
     \sqrt{3} A & - \sqrt{3}A & 0 \\
      0 & 0 & C
    \end{array}
  \right).
\]
$C$ is the height of the lattice, and $A$ is a certain fixed constant times the length of the edge of the triangle (see Figure  \ref{fig:rPdfamily}).
Hence, the ratio $A/C$ determines the lattice (up to homothety).
Figure \ref{fig:lattice-rPD} represents the ratio $A/C$ as a function of $a$,
and $a=a_1\approx 0.494722$ gives the minimum.
It shows that there exist positive constants $\epsilon_1$ and $\epsilon_2$,
a strictly monotone-increasing function $\delta\colon[0, \epsilon_1[ \to [0, \epsilon_2[$ with
$\delta(0)=0$ and such that
the lattice $\Lambda_{(a_1-\epsilon)}$ is a homothety of the lattice $\Lambda_{(a_1+\delta(\epsilon))}$.

Denote by $X(c)$ the surface $f(M_{a_1+c})$.
Since $A(a)$ and $C(a)$ are increasing functions of $a$ near the bifurcation instant $a=a_1\approx 0.494722$ (see Figure \ref{fig:rPD-AC}),
the surfaces $X(c)$ are like the pictures in the upper row in Figure \ref{fig:rPD-bifurcation}.

Now, for $\epsilon \in\left]0, \epsilon_1\right[$,
reduce the surface $X(\delta(\epsilon))$ to $\displaystyle \frac{C(a_1-\epsilon)}{C(a_1+\delta(\epsilon))}$, and denote the new surface by $Y(-\epsilon)$.
Then the lattice of $Y(-\epsilon)$ is the same as the lattice of $X(-\epsilon)$, but
the surfaces $X(-\epsilon)$ and $Y(-\epsilon)$ are not congruent to each other (see Figure \ref{fig:rPD-bifurcation}).

Similarly, for $\epsilon \in\left]0, \epsilon_1\right[$,
expand the surface $X(-\epsilon)$ to $\displaystyle \frac{C(a_1+\delta(\epsilon))}{C(a_1-\epsilon)}$, and denote the new surface by $Y(\delta(\epsilon))$.
Then the lattice of $Y(\delta(\epsilon))$ is the same as the lattice of $X(\delta(\epsilon))$,
but the surfaces $X(\delta(\epsilon))$ and $Y(\delta(\epsilon))$
are not congruent to each other (see Figure \ref{fig:rPD-bifurcation}).

One can show the nodal lines of the zero-eigenfunction at $a=a_1$ are exactly the boundary triangles in Figure \ref{fig:rPD-bifurcation}, and since the ``essential'' dimension of the zero eigenspace is one,
it seems that the surfaces $Y(c)$ give the bifurcation branch from the instant $a=a_1$.
However, they are homotheties of the original surfaces in the rPD family.

For $\epsilon \in\left]0, \epsilon_1\right[$, $X(-\epsilon)$ has index $2$ and nullity $3$,
$X(\delta(\epsilon))$ has index $1$ and nullity $3$,
$Y(-\epsilon)$ has index $1$ and nullity $3$,
$Y(\delta(\epsilon))$ has index $2$ and nullity $3$.
Hence, this bifurcation is a transcritical bifurcation.

\begin{figure}
\includegraphics[scale=.8]{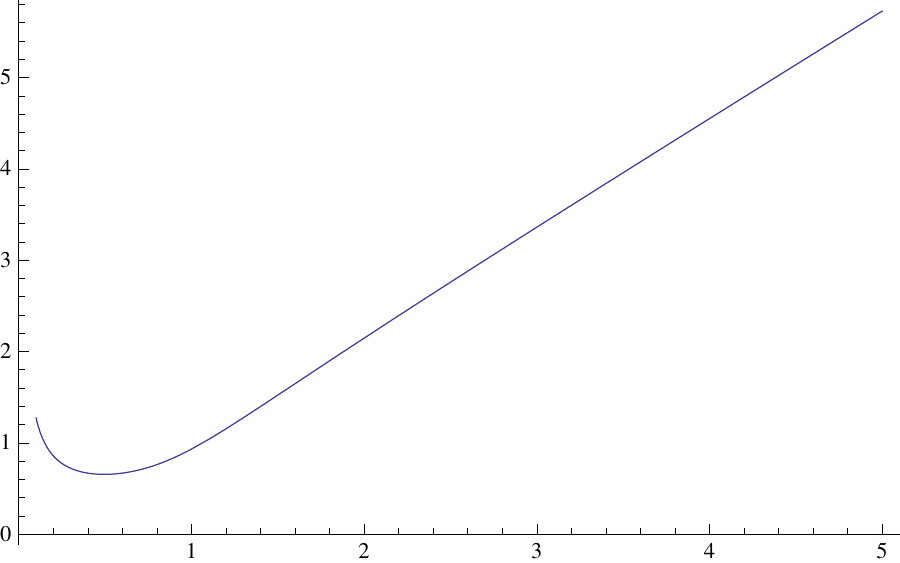}
\caption{%
The horizontal axis represents $a$, while the vertical axis indicates the ratio $A/C$ for the rPD family. The minimum of $A/C$ is attained at $a=a_1\approx 0.494722$.}
\label{fig:lattice-rPD}
\end{figure}

\begin{figure}
\includegraphics[scale=1.3]{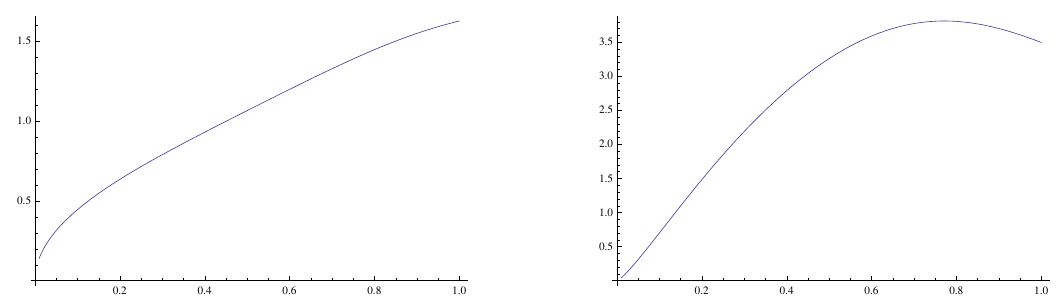}
\caption{The horizontal axis represents $a$, while the vertical axis indicates $A$ (left) and $C$ (right) for the rPD family.}
\label{fig:rPD-AC}
\end{figure}

\begin{figure}
\includegraphics[scale=.8]{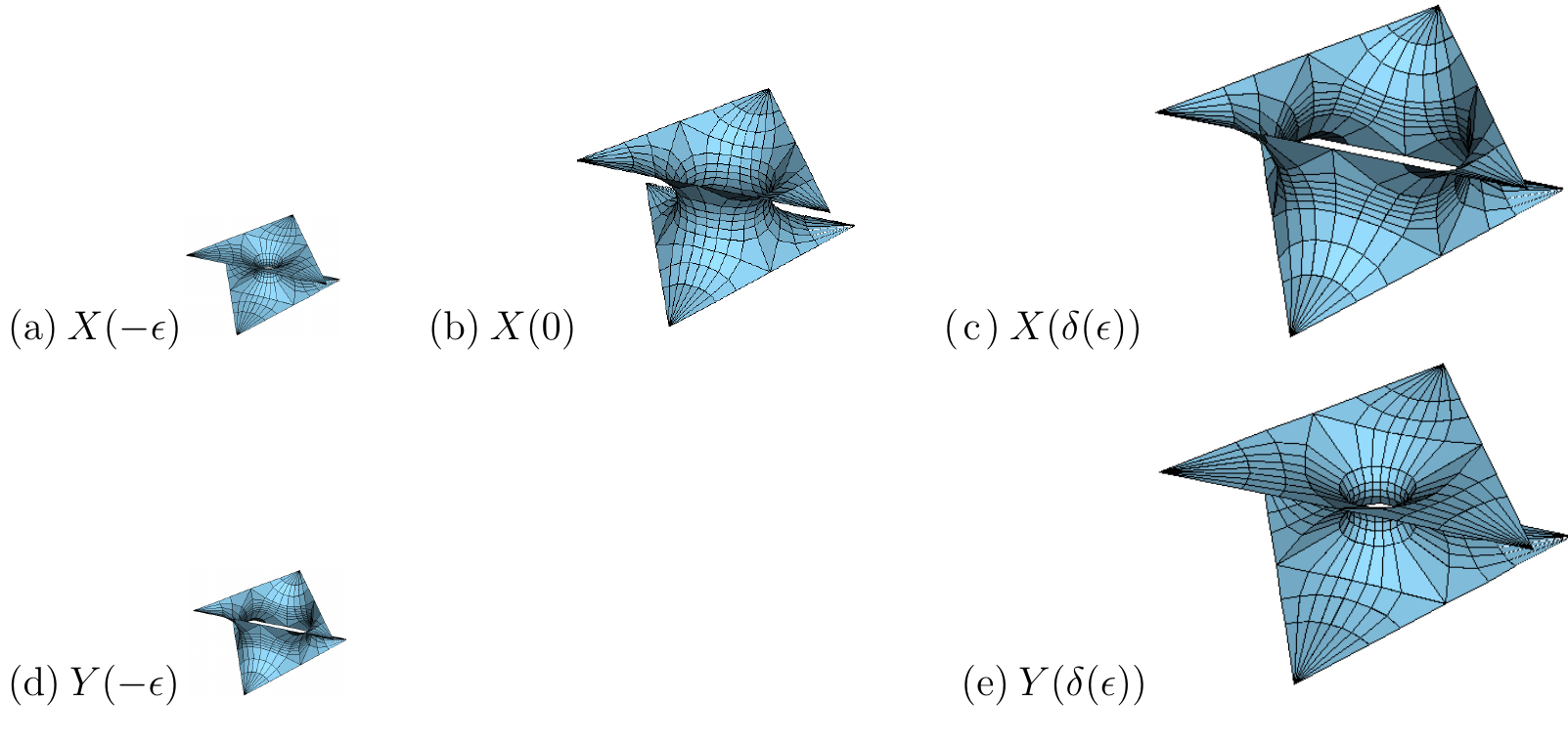}
\caption{Bifurcation at $a=a_1$ of the rPD family.
Each picture shows a half period of the corresponding triply periodic minimal surface.
The surfaces in the upper row belong to the rPD family.
The surfaces in the lower row belong to the bifurcation branch, and they are homothetic to surfaces in the rPD family.}
\label{fig:rPD-bifurcation}
\end{figure}

On the other hand, at $a=a_2 = (a_1)^{-1}\approx 2.02133$, the ratio $A/C$ is strictly monotone (Figure~\ref{fig:lattice-rPD}). This implies that the bifurcation branch contains triply periodic minimal surfaces
that are not homothetic to any other surface in the five families given in \S~\ref{sec:applications}
(Examples \ref{exa:H}---\ref{exa:tCLP}).
Moreover, we can show that the nodal lines of the zero eigenfunctions at $a=a_2$ are planar geodesics that connect each vertex of each triangle with the middle point of a side of a triangle
(Figure~\ref{fig:rPD+L}, the left picture in Figure~\ref{fig:rPD+lines}).
Remarkably, the sides of the triangles are not nodal lines, which suggests that
near $a=a_2$, our new triply periodic minimal surfaces are like the right picture in Figure \ref{fig:rPD+lines}.
It would be interesting to determine the symmetries of the new surfaces.

\begin{figure}
\includegraphics[scale=1.0]{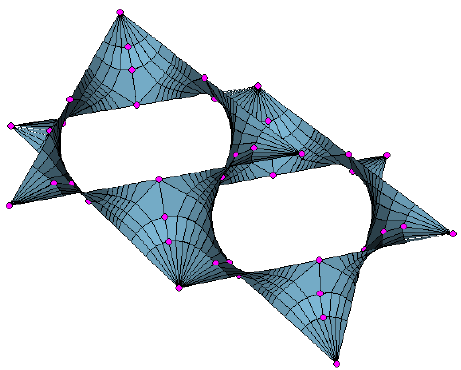}
\caption{One period of a surface in the rPD-family with the planar geodesics}.
\label{fig:rPD+L}
\end{figure}

\begin{figure}
\includegraphics[scale=.7]{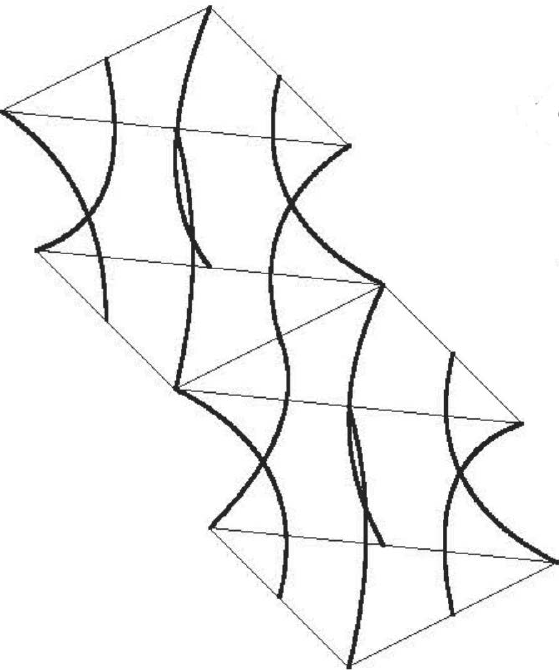}
\quad
\includegraphics[scale=.7]{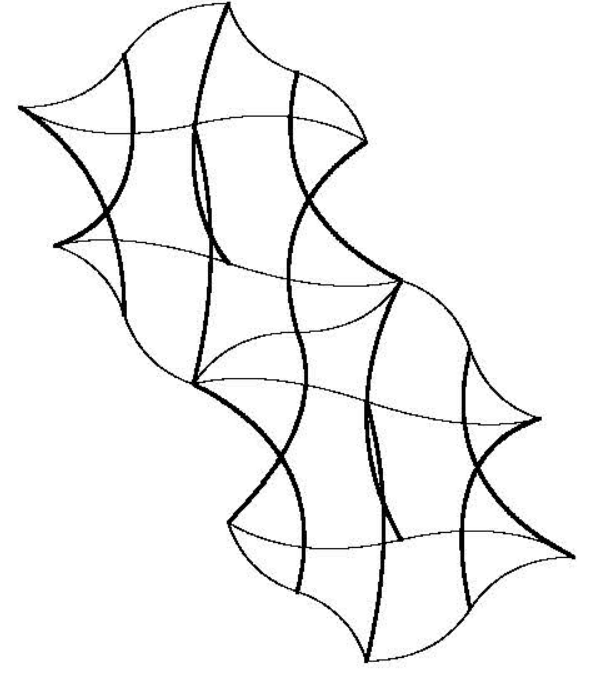}
\caption{Left: one period of a surface in the rPD-family with the planar geodesics,
Right: variation from a surface ($a=a_2$) in the rPD family with the zero eigenfunction as variation vector filed}.
\label{fig:rPD+lines}
\end{figure}
\subsection{H-family}
By a similar fashion, we find the lattice of the H-family.
Set
\begin{multline*}
B := \sqrt{3} \int_0^1
\dfrac{1-t^2}{\sqrt{t(t^3+a^3)(t^3+\frac{1}{a^3})}}\mathrm dt
\\+ 4\int_{\frac{1}{2}}^1 \dfrac{x}{\sqrt{
(a^3+\frac{1}{a^3}+6x-8x^3)(1-x^2) }}\mathrm dx,
\end{multline*}
$$
D := 8\int_0^1 \dfrac{t}{\sqrt{t(t^3+a^3)(t^3+\frac{1}{a^3})}} dt.
$$
Then, the lattice is
\[
  \Lambda = \left(
    \begin{array}{ccc}
      \frac{\sqrt{3}}{2}B &0 & 0 \\
      \frac{B}{2} & B & 0 \\
      0 & 0 & D
    \end{array}
  \right).
\]
The ratio $B/D$ determines the lattice (up to homothety).
Figure \ref{fig:lattice-H} represents the ratio $B/D$ as a function of $a$,
and $a=a_0\approx 0.49701$ gives the minimum of $B/D$.
Moreover, we can show that the nodal lines of the zero eigenfunctions at $a=a_0$ are exactly the triangles indicated in Figure~\ref{fig:H-family-T}.
And so, arguing as the case of the rPD-family, we conjecture that
the bifurcation we obtained in Theorem~A at $a=a_0$ for the H-family gives only homotheties of the surfaces in the original H-family.

\begin{figure}
\includegraphics[scale=1.0]{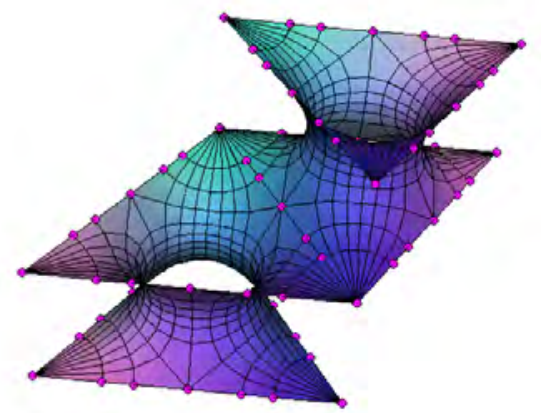}
\caption{One period of a surface in the H-family}.
\label{fig:H-family-T}
\end{figure}
\subsection{tP-family and tD-family}
By a similar way, we find the lattice of the tP-family.
Set
\[
E = 2 \int^{1}_{0}  \dfrac{1-t^2}{\sqrt{ t^8+a t^4+1}}dt + 4 \int_{0}^1
\dfrac{dt }{\sqrt{ 16 t^4-16 t^2+2+a}},
\]
\[
F= 8 \int_0^1 \dfrac{t}{\sqrt{t^8+a t^4+1}} dt.
\]
Then, the lattice is
\[
  \Lambda = \left(
    \begin{array}{ccc}
    E &0 & 0 \\
      0 & E & 0 \\
      0 & 0 & F
    \end{array}
  \right).
\]
The ratio $E/F$ determines the lattice (up to homothety).
Figure \ref{fig:lattice-tP} represents $E/F$ as a function of $a$.
By the same reason as the case of the rPD-family, we conjecture that
the bifurcation we obtained in Theorem~A at $a=a_2\approx 28.7783$ for the tP-family gives only homotheties of the surfaces in the original tP-family.
However, we conclude
 that the bifurcation at $a=a_1\approx 7.40284$ for the tP-family give
triply periodic minimal surfaces
that are not homothetic to any other surface in the five families given in \S~\ref{sec:applications}
(Examples \ref{exa:H}---\ref{exa:tCLP}).
As for the tD-family, the situation is totally analogous.

\begin{figure}
 \includegraphics[scale=.8]{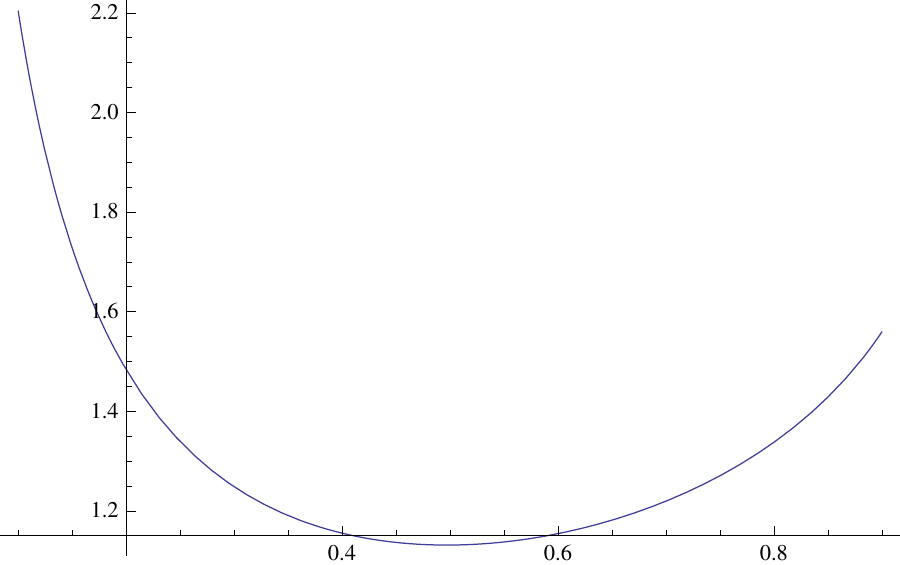}
\caption{The horizontal axis represents $a$, while the vertical axis indicates the ratio $B/D$ for the H-family. The minimum of $B/D$ is attained at $a=a_0\approx 0.49701$.}
\label{fig:lattice-H}
\end{figure}
\begin{figure}
\includegraphics[scale=.7]{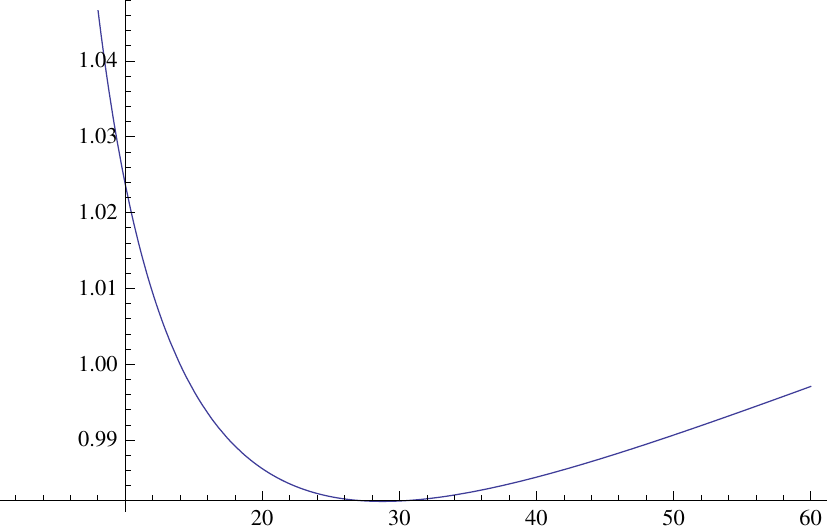}
\quad
\includegraphics[scale=.6]{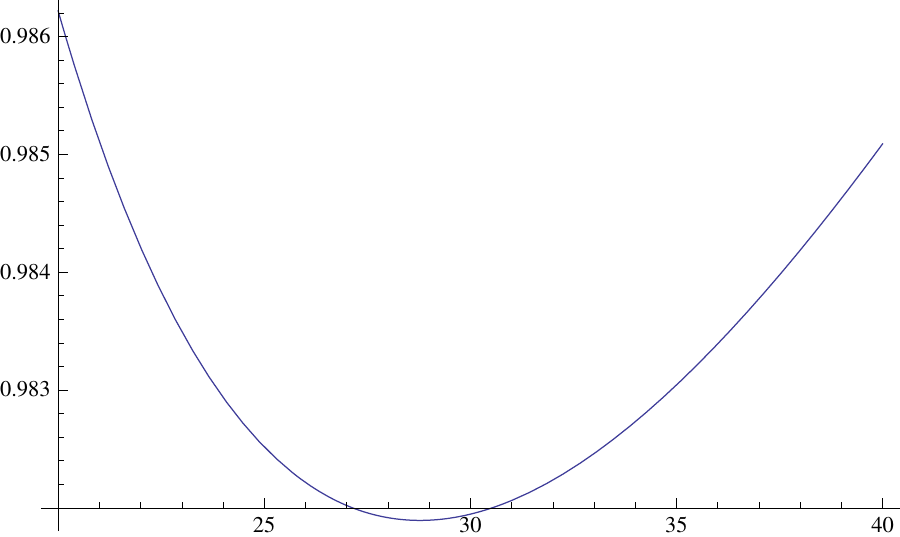}
\caption{
The horizontal axis represents $a$, while the vertical axis indicates the ratio $E/F$ for the tP-family. The minimum of $E/F$ is attained at $a=a_2\approx 28.7783$.}
\label{fig:lattice-tP}
\end{figure}

\appendix
\section{Auxiliary results}\label{app:technical}
\subsection{A divergence formula}
We will prove here formula \eqref{eq:compdivergence}, used in the proof of Proposition~\ref{thm:kapouleas}.
\begin{lemma}\label{thm:divergence}
Let $(\overline M,\overline{\mathbf g})$ be a Riemannian manifold, let $M\subset\overline M$ be
a compact submanifold (without boundary), with mean curvature vector field $\vec H$,
and let $K\in\mathfrak X(M)$ be a Killing field in $M$.
Denote by $K_M\in\mathfrak X(M)$ the vector field on $M$ obtained by orthogonal projection
of $K$. Then, $\mathrm{div}_M(K_M)=\overline{\mathbf g}(K,\vec H)$.
\end{lemma}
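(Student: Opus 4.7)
The plan is a local pointwise computation in an orthonormal frame. Fix $p\in M$ and let $\{e_1,\dots,e_k\}$ be a local orthonormal frame of $TM$ defined near $p$, and let $\overline\nabla$ denote the Levi-Civita connection of $(\overline M,\overline{\mathbf g})$. I would begin by writing the orthogonal decomposition $K=K_M+K^\perp$ along $M$, with $K^\perp$ a section of the normal bundle. Since $K_M$ is tangent, the intrinsic divergence can be computed as $\mathrm{div}_M(K_M)=\sum_i\overline{\mathbf g}(\overline\nabla_{e_i}K_M,e_i)$, because only the tangential part of $\overline\nabla_{e_i}K_M$ contributes to this sum. Thus
\[
\mathrm{div}_M(K_M)=\sum_i\overline{\mathbf g}(\overline\nabla_{e_i}K,e_i)-\sum_i\overline{\mathbf g}(\overline\nabla_{e_i}K^\perp,e_i).
\]

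Next I would exploit the Killing hypothesis on $K$. The defining property $\overline{\mathbf g}(\overline\nabla_XK,Y)+\overline{\mathbf g}(\overline\nabla_YK,X)=0$ for all $X,Y\in\mathfrak X(\overline M)$ says that $\overline\nabla K$ is a pointwise skew-symmetric endomorphism of $T\overline M$. Taking the trace of this endomorphism over the $k$-dimensional subspace $T_pM\subset T_p\overline M$ (equivalently, setting $X=Y=e_i$ and summing) forces
\[
\sum_i\overline{\mathbf g}(\overline\nabla_{e_i}K,e_i)=0.
\]
This disposes of the first sum in the previous display.

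For the remaining term, I would use the standard trick of differentiating an identity that already holds on $M$. Since $K^\perp$ is normal to $M$ and $e_i$ is tangent, the function $\overline{\mathbf g}(K^\perp,e_i)$ vanishes identically on $M$; differentiating along $e_i$ and summing yields
\[
\sum_i\overline{\mathbf g}(\overline\nabla_{e_i}K^\perp,e_i)=-\sum_i\overline{\mathbf g}\bigl(K^\perp,\overline\nabla_{e_i}e_i\bigr)=-\overline{\mathbf g}\!\left(K^\perp,\sum_i II(e_i,e_i)\right),
\]
where only the normal part of $\overline\nabla_{e_i}e_i$ survives the pairing with the normal vector $K^\perp$, and this normal part is exactly the second fundamental form $II(e_i,e_i)$. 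Under the convention $\vec H=\sum_i II(e_i,e_i)$ adopted throughout the paper (so that the scalar mean curvature $\mathcal H$ satisfies $\vec H=\mathcal H\vec n$ and $\mathcal H f_i=\overline{\mathbf g}(K,\vec H)$ in the situation of \eqref{eq:compdivergence}), this sum is $-\overline{\mathbf g}(K^\perp,\vec H)$, which in turn equals $-\overline{\mathbf g}(K,\vec H)$ because $\vec H$ is normal. Combining the three displays gives $\mathrm{div}_M(K_M)=\overline{\mathbf g}(K,\vec H)$.

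I do not anticipate any genuine obstacle: the argument is a standard frame computation, and the only points requiring care are the sign/normalization convention used for $\vec H$ (so that the identity reduces to \eqref{eq:compdivergence} with $K=K_i^\Lambda$ and $K_M=K_i^{\varphi,\Lambda}$), and the tacit correction of the evident typo in the statement, where ``$K\in\mathfrak X(M)$'' should read ``$K\in\mathfrak X(\overline M)$.''
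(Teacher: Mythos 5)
Your proof is correct and follows essentially the same route as the paper: both are orthonormal-frame computations that combine the skew-symmetry of $\overline\nabla K$ (so the trace over $TM$ vanishes) with a Gauss-formula/second-fundamental-form term; you differentiate $\overline{\mathbf g}(K^\perp,e_i)=0$, while the paper differentiates the equivalent identity $\overline{\mathbf g}(K_M,Y)=\overline{\mathbf g}(K,Y)$. You are also right that the statement has a typo and should read $K\in\mathfrak X(\overline M)$, a Killing field of $\overline M$.
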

\begin{proof}
In order to compute $\mathrm{div}_M(K_M)$, let $\overline\nabla$ denote the Levi-Civita connection of $\overline{\mathbf g}$
and let $\nabla$ be the Levi--Civita connection of the induced metric on $M$. If $\mathcal S$ is the
second fundamental form of $M$, then for all pairs $X,Y\in\mathfrak X(M)$, one has
$\overline\nabla_XY=\nabla_XY+\mathcal S(X,Y)$. Moreover, differentiating in the direction $X$ the equality $\overline{\mathbf g}(K_M,Y)=\overline{\mathbf g}(K,Y),$ we get:
\begin{equation}\label{eq:lemA}
\overline{\mathbf g}\big(\nabla_XK_M,Y\big)+\overline{\mathbf g}\big(K_M,\nabla_XY\big)
=\overline{\mathbf g}\big(\overline\nabla_XK,Y\big)+\overline{\mathbf g}\big(K,\overline\nabla_XY\big).
\end{equation}
Substituting $\overline{\mathbf g}(K,\overline\nabla_XY)=\overline{\mathbf g}(K,\nabla_XY)+\overline{\mathbf g}\big(K,\mathcal S(X,Y)\big)$ in \eqref{eq:lemA} gives:
\begin{equation}\label{eq:lemB}
\overline{\mathbf g}\big(\nabla_XK_M,Y\big)=
\overline{\mathbf g}\big(\overline\nabla_XK,Y\big)+\overline{\mathbf g}\big(K,\mathcal S(X,Y)\big).
\end{equation}
Given $x\in M$, an orthonormal frame $e_1,\ldots,e_m$ of $T_xM$, and recalling that, since
$K$ is Killing, $\overline{\mathbf g}(\overline\nabla_{e_i}K,e_i)=0$
for all $i$, we get:
\[\mathrm{div}_M(K_M)=\sum_i\overline{\mathbf g}\big(\nabla_{e_i}K_M,e_i\big)=
\sum_i\overline{\mathbf g}\big(K,\mathcal S(e_i,e_i)\big)=\overline{\mathbf g}(K,\vec H).\qedhere\]
\end{proof}
\subsection{On bifurcation for families of Fredholm operators}
\label{sub:bifresult}
Let us recall briefly the precise statement of a well known bifurcation result for solutions of an equation of the form $F(\mathfrak x,\mu)=0$,
with $\mu\in[\mu_0-\delta,\mu_0+\delta]$, $\delta>0$, a real parameter and $F(\cdot,\mu):\mathfrak X\to\mathfrak Z$ is a continuous family of smooth maps
from the (open subset of a) Banach space $\mathfrak X$ to a Banach space $\mathfrak Z$.
Our basic references are the books \cite{Kato} and \cite{Kiel2012}.
\smallskip

Assume that:
\begin{itemize}
\item[(A)] $\mathfrak X$ is continuously embedded into $\mathfrak Z$, i.e., there exists a continuous injective linear map
$\mathfrak i\colon\mathfrak X\hookrightarrow\mathfrak Z$ (we will implicitely consider $\mathfrak X\subset\mathfrak Z$);
\item[(B)] $\left[\mu_0-\delta,\mu_0+\delta\right]\ni\mu\mapsto\mathfrak x_\mu\in\mathfrak X$ is a continuous map such that
$F(\mathfrak x_\mu,\mu)=0$ for all $\mu$;
\item[(C)] setting $A_\mu=\mathrm D_\mathfrak xF(\mathfrak x_\mu,\mu):\mathfrak X\to\mathfrak Z$, then for all $\mu$:
\begin{itemize}
\item[(C1)] $A_\mu$ is a Fredholm operator of index $0$;
\item[(C2)] $A_\mu\colon\mathfrak Z\to\mathfrak Z$ is closed as an unbounded linear operator with domain $\mathfrak X$;
\end{itemize}
\item[(D)] $0$ is an isolated eigenvalue of $A_{\mu_0}$.
\end{itemize}
Assumption (C1) implies that $\mathrm{Ker}(A_{\mu_0})$ is finite dimensional, while assumption (D) implies
that the generalized eigenspace $E_{\mu_0}=\bigcup_{k\ge1}\mathrm{Ker}(A_{\mu_0}^k)$ is also finite dimensional, see \cite[Section IV.5.4]{Kato}. For the spectral theory, one considers a complexification of the space $\mathfrak X$.

Deep results from perturbation theory, see \cite[Sections II.5.1 and III.6.4]{Kato}, imply that there exists $\delta'\in\left]0,\delta\right]$ and a continuous map of finite dimensional subspaces $\left[\mu_0-\delta',\mu_0+\delta'\right]\ni\mu\mapsto E_\mu\subset\mathfrak X$
such that for all $\mu$:
\begin{itemize}
\item $\mathrm{dim}(E_\mu)=\mathrm{dim}(E_{\mu_0})$;
\item $E_\mu$ is invariant by $A_\mu$.
\end{itemize}
Denote by $\overline A_\mu$ the restriction of $A_\mu$ to $E_\mu$, and $\epsilon_\mu$ denote the sign
of the determinant of $\overline A_\mu$:
\begin{equation}\label{eq:signet}
\epsilon_\mu=\begin{cases}+1,&\text{if $\mathrm{det}(\overline A_\mu)>0$};\\
\ \ 0,&\text{if $\mathrm{det}(\overline A_\mu)=0$};\\
-1,&\text{if $\mathrm{det}(\overline A_\mu)<0$}.
\end{cases}
\end{equation}
Note that $\epsilon_\mu=0$ only if $A_\mu$ is singular, because $\mathrm{Ker}(\overline A_\mu)=
\mathrm{Ker}(A_\mu)\cap E_\mu$.
\begin{biftheorem}
In the above situation, assume:
\begin{enumerate}
\item[(BT1)] for $\mu\in\left[\mu_0-\delta,\mu_0\right[\bigcup\left]\mu_0,\mu_0+\delta\right]$, the operator $A_\mu$ is nonsingular\footnote{i.e.,
$A_\mu\colon\mathfrak X\to\mathfrak Z$ is an isomorphism};
\item[(BT2)] $\epsilon_{\mu_0-\delta}\ne\epsilon_{\mu_0+\delta}$.
\end{enumerate}
Then, $(\mathfrak x_{\mu_0},\mu_0)$ is a bifurcation point for the equation $F(\mathfrak x,\mu)=0$, i.e.,
the closure of the set $\big\{(\mathfrak x,\mu):\mathfrak x\ne\mathfrak x_\mu,\ F(\mathfrak x,\mu)=0\big\}$ contains
$(\mathfrak x_{\mu_0},\mu_0)$.
\end{biftheorem}
\begin{proof}
See \cite[Theorem II.4.4]{Kiel2012}.
\end{proof}
Condition (BT2) in the above theorem is usually referred to by saying that $A_\mu$ has an \emph{odd crossing number}
at $\mu=\mu_0$.
\medskip

The bifurcation result for minimal embeddings proved in Theorem~\ref{thm:mainbiftheorem} employs
the above bifurcation criterion for Fredholm operators. Usually, the odd crossing number assumption (BT2) is
hard to verify, in that one has no explicit description of the perturbed eigenspaces $E_\mu$.
However, in some cases the following elementary observation simplifies the task.
\begin{remark}\label{thm:simplifyingobs}
Assume that $\left[\mu_0-\delta,\mu_0+\delta\right]\ni\mu\mapsto A_\mu,A'_\mu$ are continuous paths
of Fredholm operators of index $0$ from $\mathfrak X$ to $\mathfrak Z$, with $A_{\mu_0}=A'_{\mu_0}$,
and assume that for all $\mu\in\left[\mu_0-\delta,\mu_0\right[\bigcup\left]\mu_0,\mu_0+\delta\right]$, both $A_\mu$
and $A'_\mu$ are nonsingular.
Assume that for all $r>0$ there exists $\delta'>0$ and two continuous paths
of invertible operators in the ball\footnote{Recall that the set of Fredholm operators of index $0$ is an open subset
of the space of all bounded linear operator from $\mathfrak X$ to $\mathfrak Z$ endowed with the operator norm.}
$B(A_{\mu_0},r)$ centered at $A_{\mu_0}$ and of radius $r$
joining $A_{\mu_0-\delta}$ with $A'_{\mu_0-\delta'}$ and $A_{\mu_0+\delta}$ with $A'_{\mu_0+\delta'}$
respectively. Then $A_\mu$ has an odd crossing number at $\mu_0$ if and only
if $A'_\mu$ has an odd crossing number at $\mu_0$. This follows easily from the
fact that the sign function $\epsilon$, which is defined in a sufficiently small neighborhood of $A_{\mu_0}$,
is constant along continuous paths of invertible operators. This is
because the sign of the determinant does not change along  continous paths of invertible linear maps.
\end{remark}

\end{document}